\newtheorem{theorem}{Theorem}[section]
\newtheorem{lemma}[theorem]{Lemma}
\newtheorem{proposition}[theorem]{Proposition}
\newtheorem{corollary}[theorem]{Corollary}
\newtheorem{conjecture}[theorem]{Conjecture}
\theoremstyle{definition}
\newtheorem{definition}[theorem]{Definition}
\newtheorem{remark}[theorem]{Remark}
\newcommand{\R}{{\mathbb{R}}}
\newcommand{\N}{{\mathbb{N}}}
\newcommand{\under}[2]{\smash{\displaystyle#1_{#2}}}
\newcommand{\myMin}[1]{\under{\min}{#1}}
\newcommand{\mySum}[2]{\smash{\displaystyle\sum_{#1}^{#2}}}
\newcommand{\dz}{\dot{z}}
\newcommand{\dg}{\dot{\gamma}}
\newcommand{\ehzcap}{c_{_{\rm EHZ}}}
\newcommand{\Id}{\mathbbm{1}}
\newcommand{\bF}[1]{\mathbf{F}_{#1}}
\newcommand{\kF}{\bF{K}}
\newcommand{\sqba}{c}
\renewenvironment{proof}[1][\proofname]{{\noindent \bfseries #1. }}{\qed}
\begin{document}

\title{On the symplectic size of convex polytopes}
\author{Pazit Haim-Kislev}
\maketitle

\begin{abstract}
In this paper we introduce a combinatorial formula for the Ekeland-Hofer-Zehnder capacity of a convex polytope in $\R^{2n}$. One application of this formula is a certain subadditivity property of this capacity. 
\end{abstract}

\section{Introduction and main results}

Symplectic capacities are well studied invariants in symplectic geometry which, roughly speaking, measure the “symplectic size” of sets (see for example \cite{capacity_survey_1} and \cite{capacity_survey_2}).
The first appearance of a symplectic capacity in the literature (although not under this name) was in Gromov \cite{non_squeezing} where the theory of pseudo-holomorphic curves was developed and implemented.
The concept of a symplectic capacity was later formalized by Ekeland and Hofer in \cite{eh_capacity_def}, where they also gave additional examples using Hamiltonian dynamics. Since then, many other examples of symplectic capacities were constructed.
They are divided, roughly speaking, into ones which are related to obstructions for symplectic embeddings, ones which are defined using pseudo-holomorphic curves, and ones related to the existence and behaviour of periodic orbits in Hamiltonian dynamics.

Two well known examples of symplectic capacities are the Ekeland-Hofer capacity defined in \cite{eh_capacity_def} and the Hofer-Zehnder capacity defined in \cite{hz_capacity}. These two capacities are known to coincide on the class of convex bodies in $\R^{2n}$ (\cite{viterbo_cap}, Proposition 3.10 and \cite{hz_capacity}, Proposition 4). Moreover, in this case they are equal to the minimal action of a closed characteristic on the boundary of the body. In what follows we refer to this quantity as the Ekeland-Hofer-Zehnder capacity (abbreviate by EHZ capacity). See Section \ref{preliminary_section} below, for the definition of a closed characteristic, and for a generalization of this definition to polytopes in $\R^{2n}$. 
We remark that even on the special class of convex bodies in $\R^{2n}$, there are very few methods to explicitly calculate symplectic capacities, specifically the EHZ capacity, and it is in general not an easy problem to find closed characteristics and in particular the minimal ones (cf. \cite{capacity_survey_3}).
The goal of this paper is to give a combinatorial formula for the EHZ capacity for convex polytopes, and discuss some of its applications. 

To state our result we introduce some notations. We work in $\R^{2n}$ with the standard symplectic structure $\omega$. Let $K \subset \R^{2n}$ be a convex polytope with a non-empty interior. Denote the number of $(2n-1)$-dimensional facets of $K$ by $\kF$, and the facets by $\{F_i\}_{i=1}^{\kF}$. Let $h_K(y) := \sup_{x \in K} \langle x,y \rangle$ be the support function of $K$. Denote by $n_i$ the unit outer normal to $F_i$, and $h_i = h_K(n_i)$ the ``oriented height" of $F_i$. Finally, let $S_{\kF}$ be the symmetric group on $\kF$ letters. Our main result is the following. 

\begin{theorem}
\label{formula_theorem}
For every convex polytope $K \subset \R^{2n}$
\begin{equation}
\label{formula_equation}
\ehzcap(K) = \frac{1}{2} \left[ \max_{\sigma \in S_{\kF}, (\beta_i) \in M(K)}  \mySum{1 \leq j < i \leq \kF}{} \beta_{\sigma(i)} \beta_{\sigma(j)} \omega(n_{\sigma(i)},n_{\sigma(j)}) \right]^{-1},
\end{equation}
where
\[ M(K) = \left\{ (\beta_i)_{i=1}^{\kF} : \beta_i \geq 0, \sum_{i=1}^{\kF} \beta_i h_i = 1, \sum_{i=1}^{\kF} \beta_i n_i = 0 \right\}.
\]
\end{theorem}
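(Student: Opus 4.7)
My plan is to apply the Clarke--Ekeland dual action principle to the polytope $K$, use the piecewise-linear structure of the support function $h_K$ to reduce the resulting variational problem to a finite combinatorial optimization over orderings and weights on the facets, and then compute the objective directly. The starting point is the scale-invariant identity
\[
\ehzcap(K) = \inf_\gamma \frac{\bigl(\int_0^T h_K(-J\dot\gamma)\,dt\bigr)^2}{2\int_0^T \omega(\gamma,\dot\gamma)\,dt},
\]
valid for absolutely continuous closed loops with positive symplectic area; this follows from the classical Clarke dual action functional for the convex Hamiltonian $H = j_K^2/2$ after eliminating the time parametrization via Cauchy--Schwarz. Normalizing $\int_0^T h_K(-J\dot\gamma)\,dt = 1$ converts the problem into $\ehzcap(K) = 1/\bigl(2\sup_\gamma \int \omega(\gamma,\dot\gamma)\bigr)$.

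The key structural step would be to argue that, since $h_K$ is piecewise linear and the conormal cones are generated by the facet normals $n_i$, the supremum is attained by a piecewise-linear closed loop whose velocity on each segment is of the form $\beta_{\sigma(i)} Jn_{\sigma(i)}$ for a permutation $\sigma \in S_{\kF}$ of the facets and weights $\beta_i \geq 0$, with each facet used at most once (and $\beta_i = 0$ for unused facets). Closure of such a loop then translates to $\sum_i \beta_i n_i = 0$, since $J$ is invertible. A direct bilinear computation, using $\omega(v,v)=0$, the $\omega$-invariance of $J$, and the closure identity $\omega(p_0, \sum_i v_i) = 0$, yields
\[
\int_0^T \omega(\gamma,\dot\gamma)\,dt \;=\; \sum_{1 \leq j < i \leq \kF} \beta_{\sigma(j)}\beta_{\sigma(i)} \omega(n_{\sigma(j)}, n_{\sigma(i)}),
\]
while a similar segment-by-segment computation gives $\int_0^T h_K(-J\dot\gamma)\,dt = \sum_i \beta_i h_i$. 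Enforcing the normalization $\sum \beta_i h_i = 1$ together with the closure $\sum \beta_i n_i = 0$ recovers the definition of $M(K)$, and substitution into the normalized variational formula produces exactly the identity of the theorem; compactness of $M(K)$ and continuity of the bilinear objective turn the supremum into a maximum.

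The main obstacle is the structural reduction: rigorously showing that the infimum in the Clarke variational problem, a priori over an infinite-dimensional space of loops, is attained on polygonal loops with velocities proportional to the $Jn_i$ and with at most one segment per facet. Heuristically the piecewise linearity of $h_K$ forces extremal velocities to lie in the directions of the dual cones (hence parallel to some $Jn_i$), and the bilinear action is compatible with this restriction; but ruling out minimizers with infinitely many pieces or with repeated visits to a single facet is expected to require an exchange or rearrangement argument showing that any loop visiting a facet twice can be modified, without decreasing the objective, to visit it only once. A secondary point is that the dual problem does not require $\gamma \subset \partial K$: both functionals are translation-invariant in $\gamma$, so the geometric constraint decouples from the purely combinatorial data $(\sigma, \beta)$. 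Once this structural reduction is in place, the bilinear identity above mechanically yields the claimed combinatorial formula.
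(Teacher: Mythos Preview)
Your approach is essentially the same as the paper's: the paper also starts from the Clarke dual action principle (Lemma~\ref{dual_bijection_lemma}), reduces to piecewise-linear loops with at most one segment per facet, and then performs exactly the bilinear computation you describe (Proposition~\ref{technicalProp} and Proposition~\ref{IKprop}). The structural reduction you correctly flag as the main obstacle is the content of Theorem~\ref{simple_loop_theorem}, and the paper proves it via precisely the exchange/rearrangement arguments you anticipate---breaking convex combinations into pure directions (Lemma~\ref{no_linear_combination_lemma}) and merging repeated visits to a facet (Lemma~\ref{one_speed_lemma}), in each case choosing the order so that the symplectic action does not decrease.
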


Note that the maximum runs over $S_{\kF}$, which is a finite set of permutations, and over $M(K)$, which is a finite dimensional convex polytope. Hence the combinatorial nature of the formula. Moreover, this formula allows us (up to computational power) to calculate the capacity of every convex polytope using a computer. We also note that from continuity of the EHZ capacity, some possible applications of Theorem \ref{formula_theorem} about properties of the EHZ capacity on polytopes are automatically extended to all convex bodies (cf. Theorem \ref{subadditivity_theorem} below).

For a centrally symmetric convex polytope $K$ (i.e., when $K=-K$), the above formula can be slightly simplified.
 In this case one can write the normals to the $(2n-1)$-dimensional facets of $K$ as $\{n_1,\ldots,n_{_{\kF'}},-n_1,\ldots,-n_{_{\kF'}}\}$ , where $\kF' = \frac{\kF}{2}$.

\begin{corollary}
\label{centrally_symmetric_formula_theorem}
 For a centrally symmetric convex polytope $K \subset \R^{2n}$,
 
 \[\ehzcap(K) = \frac{1}{4} \left[ \max_{\sigma \in S_{\kF'}, (\beta_i) \in M'(K)} \sum_{1 \leq j < i \leq \kF'} \beta_{\sigma(i)} \beta_{\sigma(j)} \omega(n_{\sigma(i)},n_{\sigma(j)}) \right]^{-1},\]
 where
 \[ M'(K) = \left\{ (\beta_{i})_{i=1}^{\kF'} : \sum_{i=1}^{\kF'} | \beta_i | h_i = \frac{1}{2} \right\} .\]
\end{corollary}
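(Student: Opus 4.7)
The plan is to apply Theorem~\ref{formula_theorem} to $K$ with its $\kF = 2\kF'$ facets, relabeled so that $n_{i+\kF'} = -n_i$ and $h_{i+\kF'} = h_i$ for $i \leq \kF'$. Writing $A$ and $B$ for the maxima appearing in Theorem~\ref{formula_theorem} and in the corollary respectively, it suffices to establish $A = 2B$, since then $\ehzcap(K) = 1/(2A) = 1/(4B)$. Denote the quadratic forms being maximized by $Q$ and $Q'$ respectively.

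For the inequality $A \geq 2B$, I will construct from any $(\sigma, \beta) \in S_{\kF'} \times M'(K)$ a pair $(\tilde\sigma, \tilde\beta) \in S_{2\kF'} \times M(K)$ with $Q(\tilde\sigma, \tilde\beta) = 2\,Q'(\sigma, \beta)$. Define the \emph{symmetric doubling} $\tilde\beta_i := \tilde\beta_{i+\kF'} := |\beta_i|$ for $i \leq \kF'$: this lies in $M(K)$ because the height constraint $\sum_i \tilde\beta_i h_i = 2\sum_i |\beta_i| h_i = 1$ holds and the balance cancels pair by pair. Construct $\tilde\sigma$ by placing, at position $k \leq \kF'$ (in $\sigma$-order), whichever of $\sigma(k)$ or $\sigma(k)+\kF'$ carries the outer normal $\operatorname{sign}(\beta_{\sigma(k)})\,n_{\sigma(k)}$, with the complementary index at position $\kF'+k$. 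The within-pair contributions vanish from $\omega(n, -n) = 0$, and expanding the four remaining cross-contributions between each pair of pairs $(\sigma(k), \sigma(l))$ with $k < l$ collapses, after cancellation, to $2\beta_{\sigma(l)}\beta_{\sigma(k)}\omega(n_{\sigma(l)}, n_{\sigma(k)})$ per pair, yielding the claimed identity.

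For the reverse inequality $A \leq 2B$, I pass to the coordinates $\alpha_k := \tilde\beta_k + \tilde\beta_{k+\kF'}$, $\beta_k := \tilde\beta_k - \tilde\beta_{k+\kF'}$; the constraints on $M(K)$ become $\alpha_k \geq |\beta_k|$, $\sum_k \alpha_k h_k = 1$, and $\sum_k \beta_k n_k = 0$. The first step is to argue that the maximum over $\tilde\sigma$ is attained by a ``canonical'' permutation placing the two elements of every pair in distinct halves of $\{1, \dots, 2\kF'\}$: under a case analysis of the four possible interleaving patterns of a pair of pairs, only the interleaved configuration contributes nontrivially to $Q$, so the canonical placement, which makes every pair of pairs interleaved, is an extremal choice. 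For such canonical $\tilde\sigma$ a direct expansion, combined with the balance identity $\sum_k \beta_k n_k = 0$ to kill the asymmetric cross-terms, reduces $Q$ to an expression of the form $\tfrac{1}{2}[Q'(\sigma, \alpha) + Q'(\sigma, \varepsilon\beta)]$ for suitable $\sigma \in S_{\kF'}$ and signs $\varepsilon \in \{\pm 1\}^{\kF'}$. Since $\alpha/2$ and a rescaling of $\varepsilon\beta$ both belong to $M'(K)$, each summand admits a bound in terms of $B$; the coupling $\alpha_k \geq |\beta_k|$ then forces $Q \leq 2B$.

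The main obstacle will be this last step: a naive triangle-inequality bound on the two-term expression yields only $A \leq 4B$, missing a factor of $2$. I expect the sharp inequality to come from a trade-off argument exploiting $\alpha_k \geq |\beta_k|$, which prevents $Q'(\sigma, \alpha)$ and $Q'(\sigma, \varepsilon\beta)$ from being simultaneously extremized. A cleaner alternative, which I will try first, is to argue that the maximum of $Q$ on $M(K)$ is always attained at a $\tilde\beta$ of the symmetric-doubled form of Step~1 (so that $\beta_k \equiv 0$), whereupon $Q = \tfrac{1}{2}Q'(\sigma, \alpha) \leq 2B$ is immediate; the difficulty here is that $Q(\tilde\sigma, \cdot)$ is an indefinite quadratic form, so this reduction cannot use convexity and will require either a vertex analysis of $M(K)$ under the central symmetry, or a direct exchange argument that simultaneously modifies $\tilde\sigma$ and $\tilde\beta$ while preserving the objective.
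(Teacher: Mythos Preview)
Your construction for $A \geq 2B$ is correct and is essentially the same computation the paper carries out once it has a minimizer of the required symmetric form. The difficulty is entirely in the reverse direction, and your plan for $A \leq 2B$ has two genuine gaps.

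First, the ``case analysis'' reduction to canonical permutations does not work as stated. It is not true that only the interleaved configuration of a pair of pairs contributes nontrivially: for instance, if the pair $\{k,k+\kF'\}$ occupies adjacent positions followed by the pair $\{l,l+\kF'\}$ in adjacent positions, the cross-contribution equals $(\tilde\beta_k-\tilde\beta_{k+\kF'})(\tilde\beta_l-\tilde\beta_{l+\kF'})\,\omega(n_l,n_k)$, which is nonzero in general. So the contributions do not vanish outside the interleaved case, and moreover any local swap that improves one pair of pairs may worsen another; a two-pair case analysis does not establish that the canonical placement is globally extremal.

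Second, even granting the reduction to canonical permutations and the decomposition $Q=\tfrac12[Q'(\sigma,\alpha)+Q'(\sigma,\varepsilon\beta)]$, you yourself note that the obvious bound only gives $A\le 4B$, and neither of your proposed fixes (a ``trade-off'' argument or a vertex/exchange argument showing the maximum lies on the symmetric-doubled locus) is carried out. Since $Q$ is indefinite and the constraint set $M(K)$ is a polytope whose vertices need not be symmetric, this is not a minor detail.

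The paper sidesteps both issues by not arguing combinatorially at all. It goes back to the dual variational characterization $\ehzcap(K)=2\min_{z\in\mathcal E} I_K(z)$ and, starting from the piecewise-affine minimizer $z$ furnished by Theorem~\ref{simple_loop_theorem}, applies a symmetrization due to Karasev: translate so that $z(0)=-z(\tfrac12)$, keep the half of the loop with larger action, and reflect it to build $z'$ with $z'(t+\tfrac12)=-z'(t)$. One checks directly that $I_K(z')\le I_K(z)$, so $z'$ is again a minimizer, and it automatically has the ``first half, then minus first half'' structure. Plugging $z'$ into formula~\eqref{formula_equation} then gives the inequality $A\le 2B$ for free, together with the simplification of the constraint set to $M'(K)$. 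The point is that the hard direction is established analytically at the level of loops, not algebraically at the level of the formula.
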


\begin{remark}
\label{reformulationRemark}
We note that Formula \eqref{formula_equation} in Theorem \ref{formula_theorem} can be written as
\begin{equation*}
\ehzcap(K) = \frac{1}{2} \min_{ (\beta_i,n_i)_{i=1}^{\kF} \in M_2(K)  } \left( \sum_{i=1}^{\kF} \beta_i h_K(n_i) \right)^2, 
\end{equation*} 
where
\[ M_2(K) = 
\left\{
	\begin{array}{ll}
		(\beta_i,n_i)_{i=1}^{\kF} : \beta_i \geq 0, (n_i)_{i=1}^{\kF} \text{ are different outer normals to }K\\ \sum_{i=1}^{\kF} \beta_i n_i = 0, \quad\quad
		\sum_{1 \leq j < i \leq \kF} \beta_i \beta_j \omega(n_i,n_j) = 1 \\
	\end{array}
\right\} .\]

In this form of the formula for $\ehzcap(K)$, instead of the permutation $\sigma \in S_{\kF}$ that appeared in \eqref{formula_equation}, we minimize over different orders of the normals, by going over different sequences $(n_i)_{i=1}^{\kF}$. (We refer to Section \ref{more_proofs_section} for the details.)
\end{remark}

\begin{remark}
\label{abbondandolo_remark}
As shown in \cite{Abbondandolo}, using Clarke's dual action principle (see \cite{clarke}), it is possible to express the EHZ capacity of any convex body $K \subset \R^{2n}$ (not necessarily a polytope) as
\[ \ehzcap(K) = \frac{1}{2} \left[ \sup_{z \in \widetilde{\mathcal{E}}} \int_{0}^1  \langle -J \dz, z \rangle \right]^{-1} ,\] 
where
\[ \widetilde{\mathcal{E}} = \left\{ z \in W^{1,2}([0,1],\R^{2n}) : \int_0^1 \dz dt = 0, \dz \in K^{\circ} \right\} ,\]
$K^{\circ} = \{ y \in \R^{2n} : \langle x,y \rangle \leq 1, \text{ for every } x \in K \}$ is the polar body of $K$, and $J$ is the standard complex structure in $\R^{2n}$.
When discretizing this formula, one gets a formula which is similar to the one we get in Theorem \ref{formula_theorem}. However, in this discrete version, as opposed to Theorem \ref{formula_theorem}, one needs to maximize over an infinite dimensional space of piecewise affine loops. The essence of Theorem \ref{formula_theorem}, as will be described later, is that on the boundary of a convex polytope there exists a minimizer with a very specific description, and this enables us to maximize, roughly speaking, over a much smaller space.

\end{remark}

We turn now to describe the main ingredient in the proof of Theorem \ref{formula_theorem}.

Let $K \subset \R^{2n}$ be a convex polytope, and let $\gamma:[0,1] \to \partial K$ be a closed characteristic (for the definition see Section \ref{preliminary_section}). From the definition, if $\gamma(t) \in \text{int}(F_i)$, then $\dg(t)$ must be a positive multiple of $J n_i$ (except maybe for $t$ in a subset of $[0,1]$ of measure zero). Similarly, if $\gamma(t)$ belongs to the intersection of more than one facet, then $\dg(t)$ is a non-negative linear combination of $J n_i$ for $i$ in the participating facets.
A priori, $\gamma(t)$ could return to each facet and each intersection of facets many times. For the purpose of finding the minimal action on the boundary of a convex polytope, we may ignore these options by the following.

\begin{theorem}
\label{simple_loop_theorem}
For every convex polytope $K \subset \R^{2n}$, there exists a closed characteristic $\gamma: [0,1] \to \partial K $ with minimal action such that $\dg$ is piecewise constant and is composed of a finite sequence of vectors, i.e. there exists a sequence of vectors $(w_1,\ldots,w_m)$, and a sequence $(0=\tau_0<\ldots<\tau_{m-1}<\tau_{m}=1)$ so that $\dg(t) = w_i$ for $\tau_{i-1} < t < \tau_{i}$.
Moreover, for each $j \in \{1,\ldots,m\}$ there exists $i \in \{1,\ldots,\kF\}$ so that $w_j = C_j J n_i$ , for some $C_j > 0$, and for each $i \in \{1,\ldots,\kF\}$, the set $\{t : \exists C>0, \dg(t) = C J n_i\}$ is connected, i.e. for every $i$ there is at most one $j \in \{1,\ldots,m\}$ with $w_j = C_j J n_i$.
Hence there are at most $\kF$ points of discontinuity in $\dg$, and $\gamma$ visits the interior of each facet at most once. 
\end{theorem}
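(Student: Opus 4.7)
My plan is to split the proof into two reductions, guided by Clarke's dual action principle recalled in Remark \ref{abbondandolo_remark}. Since $K$ is a polytope, the polar body $K^{\circ}$ is itself a polytope whose extreme points are $\{n_i/h_i\}_{i=1}^{\kF}$. The first reduction is to exhibit a minimal action closed characteristic $\gamma$ whose velocity $\dg$ is piecewise constant, with each piece a positive multiple of some $Jn_i$. On the interior of a facet $F_i$ the closed-characteristic condition already forces $\dg(t)\in\R_{>0}\,Jn_i$, so after reparametrization in time the speed can be taken constant there; the subtler issue is to rule out infinitely many facet visits and positive time spent on lower-dimensional intersections of facets. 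I would argue this either by a bang--bang argument for the Clarke dual (pointwise linearity of the functional in $\dz$ pushes a supremizer to take values a.e.\ in the extreme points $\{n_i/h_i\}$ of $K^{\circ}$), or by approximating $K$ by smooth convex bodies $K_\epsilon$, extracting minimal closed characteristics $\gamma_\epsilon$ on $\partial K_\epsilon$, and passing to the limit.

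Having produced such a $\gamma$, denote its edge sequence by $(e_1,\ldots,e_m)$ with $e_k=\beta_k Jn_{i_k}$, $\beta_k>0$. Since $\sum_k e_k=0$ one obtains the identity $\mathcal{A}(\gamma)=\tfrac12\sum_{j<k}\omega(e_j,e_k)$ for the action, which depends only on the ordered edge sequence. Suppose some direction is repeated, $i_a=i_b=i$ with $a<b$; then $\omega(e_a,e_b)=0$ since both vectors are parallel. Consider two candidate merging modifications of the sequence: (A) delete $e_b$ and replace $e_a$ by $e_a+e_b$, and (B) delete $e_a$ and replace $e_b$ by $e_a+e_b$. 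A direct calculation yields
\[
\Delta\mathcal{A}_A \;=\; \omega\!\Big(e_b,\,\sum_{a<k<b}e_k\Big), \qquad \Delta\mathcal{A}_B \;=\; -\,\omega\!\Big(e_a,\,\sum_{a<k<b}e_k\Big),
\]
which have opposite signs because $e_a,e_b$ are both positive multiples of the same vector $Jn_i$. Hence at least one modification does not increase the action; by the minimality of $\gamma$ it must in fact preserve the action, and iterating removes one repetition per step until each facet is visited at most once.

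The main obstacle will be verifying realizability: that the modified edge sequence can actually be placed as a closed curve on $\partial K$, with each edge contained in its prescribed facet. A naive translation of the ``middle block'' of vertices by $\pm e_b$ generically pushes the new vertex $v'_a=v_{a-1}+e_a+e_b$ off the facet $F_{i_{a+1}}$, since this would require $\omega(n_i,n_{i_{a+1}})=0$. I expect the equality $\Delta\mathcal{A}=0$ forced by minimality to constrain the local geometry enough for a valid placement to exist, possibly after an additional global shift of the block or a refined rearrangement involving non-adjacent edges whose net effect on the action vanishes. Once realizability is secured, the finite edge count from the first stage guarantees that the procedure terminates, producing the minimal $\gamma$ claimed in the theorem, with at most one edge per facet direction and at most $\kF$ discontinuities in $\dg$.
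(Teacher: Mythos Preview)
Your merging computation (the $\Delta\mathcal A_A$, $\Delta\mathcal A_B$ with opposite signs) is exactly the mechanism the paper uses in its Lemma~\ref{one_speed_lemma}, and your bang--bang intuition for the first reduction is likewise the content of Lemma~\ref{no_linear_combination_lemma}. The gap is precisely the realizability problem you flag, and your proposed fix --- that minimality forcing $\Delta\mathcal A=0$ should impose enough geometric constraints to keep the modified polygon on $\partial K$ --- does not work in general. The translated middle block need not stay on the boundary, and nothing in the equality $\omega\big(e_b,\sum_{a<k<b}e_k\big)=0$ controls the inner products $\langle v'_k,n_{i_{k+1}}\rangle$ that you would need.

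The paper sidesteps this entirely by never performing the modifications on $\gamma$ itself. Instead it passes once, via Lemma~\ref{dual_bijection_lemma}, to the dual minimizer $z\in\mathcal E^\dagger$, and carries out both reductions (breaking convex combinations, then merging repeated directions) on loops in $W^{1,2}([0,1],\R^{2n})$ with no constraint that they lie on $\partial K$. In $\mathcal E$ the only requirements are $\int\dot z=0$ and $\int\langle -J\dot z,z\rangle=1$; the first is trivially preserved by merging, and the second is handled by renormalizing, which by Proposition~\ref{IKprop} can only decrease $I_K$ (since $I_K(z)=c^2$ depends only on the common scale of the velocities). Thus the modified loop is again a minimizer of $I_K$ on $\mathcal E$, hence lies in $\mathcal E^\dagger$, and a second application of Lemma~\ref{dual_bijection_lemma} produces the desired closed characteristic on $\partial K$ automatically. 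Realizability is not proved directly; it is a consequence of the dual variational principle. If you reroute your argument through the dual in this way, the rest of your outline goes through.
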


Theorem \ref{formula_theorem} follows from the combination of the existence of a simple closed characteristic as described in Theorem \ref{simple_loop_theorem}, and Clarke's dual action principle (see Section \ref{preliminary_section} for the details).

\begin{remark}
There are examples for polytopes with action minimizing closed characteristics which do not satisfy the properties of the closed characteristics one gets from Theorem \ref{simple_loop_theorem}. One example, which can be easily generalized to any convex polytope with an action minimizing closed characteristic passing through a Lagrangian face, is the standard simplex in $\R^4$ where for example on the face $\{ x_1 = 0\} \cap \{ x_2 = 0\}$ one is free to choose a non-trivial convex combination of $e_3$ and $e_4$ as the velocity of an action minimizing closed characteristic, one can also choose it to be equal to $e_3$ for some time, and then to $e_4$, and then $e_3$ again so that the set $\{ t :\exists C>0, \dg(t) = C J n_i\}$ is not connected. 
See \cite{Nir} for a full description of the dynamics of action minimizing closed characteristics on the standard simplex.
\end{remark}

As an application of Theorem \ref{formula_theorem} we solve a special case of the subadditivity conjecture for capacities. This conjecture, raised in \cite{bang_problem}, which is related with a classical problem from convex geometry known as Bang's problem, can be stated as follows:
\begin{conjecture}
If a convex body $K \subset \R^{2n}$ is covered by a finite set of convex bodies $\{K_i\}$ then 
\[ \ehzcap(K) \leq \sum_i \ehzcap(K_i)  .\]
\end{conjecture}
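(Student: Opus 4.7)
The plan is to combine the combinatorial formulas of Theorem~\ref{formula_theorem} and Remark~\ref{reformulationRemark} with the structural description of minimizing characteristics provided by Theorem~\ref{simple_loop_theorem}, in order to turn the inequality into a finite-dimensional algebraic manipulation. By the continuity of $\ehzcap$ with respect to Hausdorff distance, as noted just after Theorem~\ref{formula_theorem}, I would first reduce to the case in which $K$ and all of the $K_i$ are convex polytopes.

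For each $K_i$, choose optimal data $(\beta^{(i)}, n^{(i)}) \in M_2(K_i)$ achieving $\ehzcap(K_i) = \tfrac{1}{2}\bigl(\sum_k \beta^{(i)}_k h_{K_i}(n^{(i)}_k)\bigr)^2$ as in Remark~\ref{reformulationRemark}. Split the normals of each $K_i$ into two classes: \emph{external} normals, which are also outer normals to $K$ and therefore satisfy $h_K(n_k^{(i)}) = h_{K_i}(n_k^{(i)})$, and \emph{internal} normals, corresponding to facets of $K_i$ that lie in the interior of $K$. The plan is to construct admissible data $(\beta, n) \in M_2(K)$ by concatenating the external portions of $(\beta^{(i)}, n^{(i)})$ across $i$, rescaling each piece by a coefficient $c_i > 0$ in order to enforce the momentum constraint $\sum_k \beta_k n_k = 0$ for $K$; this should be achievable because, by the constraint for each $K_i$, the external sum $\sum_{\text{ext}} \beta^{(i)}_k n^{(i)}_k$ equals minus the internal sum, and internal facets are shared between adjacent pieces of the cover.

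After rescaling, one normalizes to satisfy the constraint $\sum_{k<\ell} \beta_k \beta_\ell \omega(n_k, n_\ell) = 1$, evaluates the objective $\tfrac{1}{2}(\sum_k \beta_k h_K(n_k))^2$, and shows that it is at most $\sum_i \ehzcap(K_i)$. The hard part is precisely this last inequality: the rescaling coefficients $c_i$ are forced by the internal-normal data of the $K_i$, which are not free parameters, and it is not \emph{a priori} clear that the resulting objective satisfies the desired upper bound. A further difficulty is that, for covers by more than two pieces, internal facets do not generically pair up cleanly, and for general convex covers (with overlapping $K_i$) the very notion of an ``internal facet'' requires refinement. For these reasons I would first attack the special case where the cover consists of two convex polytopes separated by a single affine hyperplane: there the cancellation involves exactly one internal facet and can be carried out explicitly with the formula of Remark~\ref{reformulationRemark}, after which the general case would be approached either by induction on iterated hyperplane cuttings, or by reducing an arbitrary convex cover to such cuttings through an independent geometric argument.
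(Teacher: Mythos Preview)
The statement you are addressing is the subadditivity \emph{conjecture}, and the paper does not prove it; it proves only the special case of a single hyperplane cut (Theorem~\ref{subadditivity_theorem}). So there is no ``paper's own proof'' to compare against for the full statement.

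Your plan for the hyperplane case is essentially the paper's argument: take minimizers in $M_2(K_1)$ and $M_2(K_2)$, observe that each has at most one internal normal (the normal to the cutting hyperplane) by Theorem~\ref{simple_loop_theorem}, choose rescaling coefficients so that the two internal contributions cancel, check that the concatenated data satisfies the constraints of $M_2(K)$ (or more precisely gives an admissible test sequence as in Remark~\ref{normals_repititions_remark}), and bound the objective using the elementary inequality $2ab \le a^2 + b^2$. This part of your proposal is correct and matches the paper.

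The genuine gap is in the final sentence. Induction on iterated hyperplane cuts yields subadditivity only for partitions obtained by such cuts, not for arbitrary finite convex covers: a general cover may overlap, need not be a partition, and even a convex partition need not arise from successive hyperplane slices. Your alternative, ``reducing an arbitrary convex cover to such cuttings through an independent geometric argument,'' is precisely the missing step and is, as far as is known, open; no such reduction preserving the capacity inequality is available. Likewise, your direct approach for general covers runs into the obstruction you yourself identify: internal facets are shared among arbitrarily many pieces (or among none, if pieces overlap), so the single-coefficient cancellation that works for one hyperplane has no obvious analogue. In short, your proposal reproduces Theorem~\ref{subadditivity_theorem} but does not advance the conjecture beyond it.
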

In Section 8 of \cite{bang_problem}, the motivation of this conjecture and its relation with Bang's problem is explained together with some examples. 
It is known that when cutting the euclidean ball $B \subset \R^{2n}$ with some hyperplane into $K_1$ and $K_2$, one has $\ehzcap(B) = \ehzcap(K_1) + \ehzcap(K_2)$. 
The fact that $\ehzcap(B) \geq \ehzcap(K_1) + \ehzcap(K_2)$ was first proved in \cite{ball_add} using an argument involving pseudo-holomorphic curves, and in \cite{bang_problem} it is shown that $\ehzcap(B) \leq \ehzcap(K_1) + \ehzcap(K_2)$.
As a consequence of Theorem \ref{formula_theorem} above, we are able to prove subadditivity for hyperplane cuts of arbitrary convex domains.
\begin{theorem}
\label{subadditivity_theorem}
Let $K \subset \R^{2n}$ be a convex body. Let $n \in S^{2n-1}, c \in \R,$ and $H^- = \{x : \langle x,n\rangle \leq c\}$, $H^+ = \{x: \langle x,n \rangle \geq c\}$. Then for $K_1 = K \cap H^+$ and $K_2 = K \cap H^-$, we have
\[ \ehzcap(K) \leq \ehzcap(K_1) + \ehzcap(K_2).\]
\end{theorem}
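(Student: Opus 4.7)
By continuity of the EHZ capacity under Hausdorff approximation, and since polytopal approximations of $K$ give polytopal cuts $K_1^{(k)}, K_2^{(k)}$ converging in Hausdorff distance to $K_1, K_2$, it suffices to prove the inequality for polytopes. Translate $\R^{2n}$ so that the cutting hyperplane passes through the origin ($c=0$); this preserves all three EHZ capacities and simplifies the support-function identities to $h_{K_1}(-n) = h_{K_2}(n) = 0$, while $h_{K_j}(n_i) = h_K(n_i)$ on every outer normal $n_i$ of $K$ surviving in $K_j$. Apply Theorem \ref{formula_theorem} to obtain maximizers $(\beta^{(j)}, \sigma^{(j)})$ for $K_j$, $j=1,2$, with $M_j := Q(\beta^{(j)}, \sigma^{(j)}) = 1/(2\ehzcap(K_j))$. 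The normals involved are facet-normals of $K$ (those surviving in $K_j$) together with the cut-face normal ($-n$ for $j=1$, $+n$ for $j=2$) carrying coefficient $\alpha_j \ge 0$. If $\alpha_1=0$ or $\alpha_2=0$, the corresponding maximizer is already admissible for $K$ and the inequality is immediate, so assume $\alpha_1,\alpha_2>0$.

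The heart of the argument is a combinatorial construction of an admissible element in the formula for $\ehzcap(K)$ from $(\beta^{(1)},\sigma^{(1)})$ and $(\beta^{(2)},\sigma^{(2)})$. I first establish the key auxiliary fact that $Q$ is invariant under cyclic shifts of $\sigma$ whenever $\sum_i\beta_i n_i = 0$, via a short computation using antisymmetry of $\omega$ and the identity $\sum_{k\ge 2}\beta_{\sigma(k)}n_{\sigma(k)} = -\beta_{\sigma(1)}n_{\sigma(1)}$. I use this to rotate each $\sigma^{(j)}$ so the cut-face normal occupies the first position, preserving $Q^{(j)} = M_j$; after rotation, the terms of $Q^{(j)}$ involving the cut normal telescope to a multiple of $\omega(n,n) = 0$, so that the non-cut portion alone contributes $M_j$. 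I then form the combined element on $K$ by setting the coefficient of facet $F_i$ to $\beta_i := s_1\beta^{(1)}_i + s_2\beta^{(2)}_i$, with $s_1 = \alpha_2/(\alpha_1+\alpha_2)$ and $s_2 = \alpha_1/(\alpha_1+\alpha_2)$ (and $\beta^{(j)}_i = 0$ if $F_i$ is not used by the $K_j$-maximizer), and choose as permutation the concatenation of the non-cut parts of the rotated $\sigma^{(j)}$'s. Then $\sum\beta_i n_i = s_1\alpha_1 n - s_2\alpha_2 n = 0$ and $\sum\beta_i h_K(n_i) = s_1 + s_2 = 1$, so the element is admissible; and the cross-terms in $Q$ between $K_1$- and $K_2$-normals collapse, via the identities $\sum_{\text{non-cut}}\beta^{(2)}n^{(2)} = -\alpha_2 n$ and $\sum_{\text{non-cut}}\beta^{(1)}n^{(1)} = \alpha_1 n$, to $-s_1 s_2\alpha_1\alpha_2\,\omega(n,n) = 0$, yielding $Q_{\mathrm{cmb}} = s_1^2 M_1 + s_2^2 M_2 = (\alpha_2^2 M_1 + \alpha_1^2 M_2)/(\alpha_1+\alpha_2)^2$.

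The elementary inequality $(\alpha_2 M_1 - \alpha_1 M_2)^2\ge 0$ rearranges into $Q_{\mathrm{cmb}}\ge M_1 M_2/(M_1+M_2)$, hence
\[
\ehzcap(K) \;=\; \frac{1}{2\max Q}\;\le\;\frac{1}{2\,Q_{\mathrm{cmb}}}\;\le\;\frac{M_1+M_2}{2 M_1 M_2}\;=\;\ehzcap(K_1)+\ehzcap(K_2).
\]
The main technical obstacle is the \emph{overlap} case: when the same facet-normal of $K$ appears in both $K_j$-maximizers, the naive concatenation places it at two positions in the combined permutation, whereas $M(K)$ admits only one coefficient per facet. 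Resolving this requires merging the two copies at a single carefully chosen position in the permutation and checking that, using the same cyclic-shift invariance and the antisymmetry of $\omega$, the $Q$-identity above is preserved (or at worst $Q_{\mathrm{cmb}}$ decreases to a value still bounded below by $M_1 M_2/(M_1+M_2)$); this bookkeeping is the most delicate part of the argument.
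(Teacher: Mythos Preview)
Your overall strategy matches the paper's: reduce to polytopes, translate so the cut passes through $0$, take optimizers for $K_1$ and $K_2$, cyclically rotate each so the cut normal sits at an endpoint (you correctly prove cyclic invariance of $Q$, which the paper uses without comment), splice the non-cut parts together with weights chosen so that the cut-normal contributions cancel, and finish with $2ab\le a^2+b^2$. The computations you sketch (vanishing of the cut-normal's contribution to $Q^{(j)}$ after rotation, vanishing of cross terms, and the final elementary inequality) are all correct and are exactly the ones the paper carries out, only in the dual ``min'' formulation of Remark~\ref{reformulationRemark} rather than the ``max'' form of Theorem~\ref{formula_theorem}.

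The one genuine gap is the overlap case, which you flag but do not resolve. Your proposed fix---merging the two occurrences of a shared normal at a single position and arguing $Q$ does not drop below $M_1M_2/(M_1+M_2)$---can in fact be carried out using Lemma~\ref{one_speed_lemma}: that lemma shows one can always merge two intervals with the same velocity (in one of the two possible directions) without decreasing $\int\langle -J\dot z,z\rangle$, and since $\sum\beta_i n_i$ and $\sum\beta_i h_i$ are unchanged by merging, iterating gives an element of $M(K)$ with $Q_{\mathrm{merged}}\ge Q_{\mathrm{cmb}}$. But you do not invoke this lemma, and without it the claim is unsupported.

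The paper sidesteps this difficulty entirely: rather than forcing the combined element into $M(K)$, it appeals to Remark~\ref{normals_repititions_remark}, which says that \emph{any} sequence $(\delta_i,u_i)_{i=1}^m$ of outer normals to $K$ (with repetitions allowed) satisfying $\delta_i\ge 0$, $\sum\delta_i u_i=0$, $\sum\delta_i h_K(u_i)=1$ already yields the bound $\ehzcap(K)\le\tfrac12\bigl[\sum_{j<i}\delta_i\delta_j\omega(u_i,u_j)\bigr]^{-1}$. This follows directly from the dual action principle (such a sequence defines a piecewise-affine element of $\mathcal{E}$), and it makes the overlap issue disappear: the concatenated sequence is admissible as-is, no merging required. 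This is both simpler and conceptually cleaner than the merging route.
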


The structure of the paper is the following. In Section \ref{preliminary_section} we recall some relevant definitions. In Section \ref{proof_section} we prove Theorem \ref{simple_loop_theorem}, Theorem \ref{formula_theorem} and Corollary \ref{centrally_symmetric_formula_theorem}, and in Section \ref{more_proofs_section} we use Theorem \ref{formula_theorem} to prove Theorem \ref{subadditivity_theorem}.

\textbf{Acknowledgement:}  This paper is a part of the author's thesis, being carried out under the supervision of Professor Shiri Artstein-Avidan and Professor Yaron Ostrover at Tel-Aviv university. I also wish to thank Roman Karasev and Julian Chaidez for helpful comments and remarks.
I am grateful to the anonymous referee for a thorough review and very helpful comments and suggestions.
The work was supported by the European Research Council (ERC) under the European Union Horizon 2020 research and innovation programme [Grant number 637386], and by ISF grant number 667/18.

\section{Preliminaries}
\label{preliminary_section}

\subsection{The EHZ capacity}
Let $\R^{2n}$ be equipped with the standard symplectic structure $\omega$. A normalized symplectic capacity on $\R^{2n}$ is a map $c$ from subsets $U \subset \R^{2n}$ to $[0,\infty]$ with the following properties.
\begin{enumerate}
\item If $U \subseteq V$, $c(U) \leq c(V)$,
\item $c(\phi(U)) = c(U)$ for any symplectomorphism $\phi : \R^{2n} \to \R^{2n}$,
\item $c(\alpha U) = \alpha^2 c(U)$ for $\alpha > 0$,
\item $c(B^{2n}(r)) = c(B^2(r) \times \R^{2n-2}) = \pi r^2$.
\end{enumerate}
For a discussion on symplectic capacities and their properties see e.g. \cite{capacity_survey_1}, \cite{capacity_survey_2} and \cite{hofer_zehnder}.

As mentioned in the introduction, two important examples for symplectic capacities are the Ekeland-Hofer capacity (see \cite{eh_capacity_def}) and the Hofer-Zehnder capacity (see \cite{hz_capacity}). On the class of convex bodies in $\R^{2n}$ (i.e., compact convex sets with non-empty interior), they coincide and we call the resulting function, the EHZ capacity. Moreover, for a smooth convex body, the EHZ capacity equals the minimal action of a closed characteristic on the boundary of the body. Since the focus of this paper is the EHZ capacity, we omit the general definitions of the Hofer-Zehnder and Ekeland-Hofer capacities, and define the EHZ capacity directly.

We start with the definition of a closed characteristic.
Recall that the restriction of the standard symplectic form to the boundary of a smooth domain $\partial \Sigma$, defines a $1$-dimensional subbundle $\text{ker}(\omega | \partial \Sigma)$. A closed characteristic $\gamma$ on $\partial \Sigma$ is an embedded circle in $\partial \Sigma$, whose velocity belongs to $\text{ker}(\omega | \partial \Sigma)$, i.e. $\omega(\dg,v) = 0, \forall v \in T \partial \Sigma$.
This holds if and only if $\dg(t)$ is parallel to $ J n$, where $n$ is the outer normal to $\partial \Sigma$ in the point $\gamma(t)$, and $J$ is the standard complex structure.

From the dynamical point of view, a closed characteristic is any reparametrization of a periodic solution to the Hamiltonian equation $\dg(t) = J \nabla H (\gamma(t))$, for a smooth Hamiltonian function $H : \R^{2n} \to \R$ with $H |_{\partial \Sigma} = c$, and $H |_{\Sigma} \leq c$ for some $c \in \R$ a regular value of $H$. We call these periodic solutions closed Hamiltonian trajectories.

We recall that the action of a closed loop $\gamma : [0,T] \to \R^{2n}$ is defined by 
\[A(\gamma) := \frac{1}{2} \int_0^T \langle J \gamma(t), \dg(t) \rangle dt,\]
 and it equals the symplectic area of a disc enclosed by $\gamma$.
 
The EHZ capacity of a smooth convex body $K \subset \R^{2n}$ is
\[\ehzcap(K) = \min \{ A(\gamma) : \gamma \text{ is a closed characteristic on } \partial K \}.\]
It is known that the minimum is always attained (see \cite{eh_capacity_def}, \cite{hofer_zehnder}). 
One can extend this definition by continuity to non-smooth convex domains with non-empty interior. We elaborate in the next section (see e.g. \cite{singular_survey_1}, \cite{singular_survey_2}).

\subsection{The case of convex polytopes}
\label{preliminary_polytopes_section}
The explicit definition of the EHZ capacity above was given only for smooth bodies, and extended by continuity to all convex domains with non-empty interior. It turns out that also in the case of a non-smooth body, the capacity is given by the minimal action of a closed characteristic on the boundary of $K$ (see \cite{singular_capacity}), however one then needs to discuss generalized closed characteristics. Let us state this precisely here for the case of convex polytopes.

Let $K \subset \R^{2n}$ be a convex polytope. For the following discussion suppose that the origin $0$ belongs to $K$.
Recall that we denote the $(2n-1)$-dimensional facets of $K$ by $\{F_i\}_{i=1}^{\kF}$, and their outward unit normals by $\{n_i\}_{i=1}^{\kF}$.
Let $x \in \partial K$.
We define the outward normal cone of $K$ at $x$ to be $N_K(x) := \R_+ \text{conv}\{n_i: x \in F_i \}$ (for the definition of the outward normal cone for a general convex body see \cite{singular_survey_2}).
Recall that $W^{1,2}([0,1],\R^{2n})$ is the Hilbert space of absolutely continuous functions whose derivatives are square integrable. We equip this space with the natural Sobolev norm:
\[ \| z\|_{1,2} := \left( \int_0^1 \| z(t) \|^2 + \| \dz(t) \|^2 dt \right)^{\frac{1}{2}} .\]
\begin{definition}
\label{ccDef}
A \begin{it}closed characteristic\end{it} on $\partial K$ is a closed loop $\gamma \in W^{1,2}([0,1],\R^{2n})$ which satisfies $\text{Im}(\gamma) \subset \partial K$, and $\dg(t) \in J N_K(\gamma(t))$ for almost every $t \in [0,1]$.
\end{definition}

We remark that the condition $\text{Im}(\gamma) \subset \partial K$ can be weakened to $\gamma(0) \in \partial K$, since the assumption on $\dg$ and the fact that $\gamma$ is a closed loop already imply that $\gamma(t) \in \partial K$ for each $t$ (see \cite{singular_survey_2}).

Definition \ref{ccDef} also has a Hamiltonian dynamics interpretation. Let $H$ be a Hamiltonian function for which $K$ is a sub-level set, and $\partial K$ is a level set. Just like in the smooth case, (generalized) closed Hamiltonian trajectories of the Hamiltonian $H$ on $\partial K$, are reparametrizations of closed characteristics on $\partial K$, and upto a reparametrization, every closed characteristic is a closed Hamiltonian trajectory, only instead of $\dg(t) = J \nabla H(\gamma(t))$, the Hamiltonian equation becomes an inclusion
\[\dg(t) \in J \partial H(\gamma(t)) \text{ almost everywhere},\]
where $\partial H$ is the subdifferential of $H$ (see e.g. \cite{subdiff_1}). We remark that if $H$ is smooth at the point $x$, then $\partial H(x) = \{ \nabla H(x) \}$, and hence if $H$ is smooth the two Hamiltonian equations coincide.
For simplicity, we shall work with a specific Hamiltonian function. Denote the gauge function of $K$ by 
\[ g_K(x) = \inf\{\lambda : \frac{x}{\lambda} \in K\},\]
and consider the Hamiltonian function $g_K^2$. 
Note that $g_K^2 | _{\partial K} = 1$. For each $1 \leq i \leq \kF$ let $p_i = J \nabla (g_K^2)(x)$, for a point $x \in \text{int}(F_i)$. 
It is easily seen that the subdifferential of $g_K^2$ at the point $x \in \partial K$ is equal to
\[  \text{conv}\{ \nabla (g_K^2)|_{\text{int}(F_i)} : x \in F_i \},\]
which implies
\[ J \partial g_K^2(x) = \text{conv}\{ p_i : x \in F_i \}.\]

To conclude, for a convex polytope $K \subset \R^{2n}$, the EHZ capacity is the minimal action over all periodic solutions $\gamma \in W^{1,2}([0,T],\partial K)$, to the Hamiltonian inclusion: 
\[ \dg(t) \in \text{conv}\{ p_i : \gamma(t) \in F_i \} \text{ almost everywhere}.\]
\subsection{Clarke's dual action principle}

Let $K \subset \R^{2n}$ be a convex body (not necessarily smooth). Recall that the support function of $K$ is $h_K(x) = \sup \{ \langle y, x \rangle ; y \in K \}$. Note that $h_K$ is the gauge function of $K^\circ$ and that $4^{-1}g_K^2$ is the Legendre transform of $h_K^2$ (see e.g. \cite{singular_capacity}).

Following Clarke (see \cite{clarke}), we look for a dual variational principle where solutions would correspond to closed characteristics (cf. \cite[Section 1.5]{hofer_zehnder}). 
Consider the problem 
\[ \min_{z \in \mathcal{E}} \int_0^1 h_K^2(-J \dz(t)) dt,\]
where
\[ \mathcal{E} = \left\{ z \in W^{1,2}([0,1],\R^{2n}) : \int_0^1 \dz(t) dt = 0, \int_0^1 \langle -J\dz(t),z(t) \rangle dt = 1 \right\} .\]
Define
\[ I_K(z) = \frac{1}{4} \int_0^1 h_K^2(-J \dz(t)) dt .\]
Let 
\[ \mathcal{E}^\dagger = \left\{ z \in \mathcal{E} : \exists \alpha \in \R^{2n} \text{ such that } 8 I_K(z) z + \alpha \in \partial h_K^2(-J \dz) \right\}.\]
This is the set of weak critical points of the functional $I_K$ (see \cite{singular_capacity}).
The following lemma is an adjustment of the dual action principle to the non-smooth case, and it appears e.g., as Lemma 5.1 in \cite{singular_capacity}.
\begin{lemma}
\label{dual_bijection_lemma}
Let $K \subset \R^{2n}$ be a convex polytope.
There is a correspondence between the set of 
closed characteristics $\gamma$ on $\partial K$, and the set of elements $z \in \mathcal{E}^{\dagger}$. Under this correspondence, there exist $\lambda \in \R^+$, and $b \in \R^{2n}$ so that $z = \lambda \gamma + b$ and moreover $A(\gamma) = 2I_K(z)$. In particular, any minimizer $z \in \mathcal{E}$ of $I_K(z)$ belongs to $\mathcal{E}^{\dagger}$ and therefore has a corresponding closed characteristic with minimal action.
\end{lemma}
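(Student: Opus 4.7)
The plan is to match the Euler--Lagrange condition for the constrained variational problem defining $\mathcal{E}^\dagger$ with the generalized Hamiltonian inclusion for closed characteristics on $\partial K$, using Fenchel--Legendre duality between $h_K^2$ and $g_K^2/4$.

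First I write the Lagrange-multiplier condition for $z$ to be a critical point of $I_K$ on $\mathcal{E}$. The two constraints produce multipliers $\mu \in \R$ (for the normalized symplectic area) and $\alpha \in \R^{2n}$ (for the zero-mean of $\dz$), and the non-smoothness of $h_K^2$ forces the gradient to be replaced by a measurable selection from $\partial h_K^2$. The outcome is the a.e. inclusion $\mu z(t) + \alpha \in \partial h_K^2(-J\dz(t))$. Pairing with $-J\dz$, integrating, and applying Euler's identity $\langle \xi, y\rangle = 2 h_K^2(y)$ for $\xi \in \partial h_K^2(y)$ (valid by $2$-homogeneity of $h_K^2$) together with the constraints pins down $\mu = 8 I_K(z)$, matching exactly the definition of $\mathcal{E}^\dagger$.

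Next I invert the subdifferential inclusion by Fenchel duality. Since $h_K$ is a support function, a short computation shows that the convex conjugate of $h_K^2$ equals $g_K^2/4$, so the above is equivalent to $-J\dz \in \tfrac{1}{4}\partial g_K^2(8 I_K(z) z + \alpha)$. Setting $\tilde\gamma := 8 I_K(z) z + \alpha$ then gives $\dot{\tilde\gamma} \in 2 I_K(z)\, J \partial g_K^2(\tilde\gamma)$. Euler's identity for the $2$-homogeneous $g_K^2$ yields $g_K^2(\tilde\gamma(t)) = \tfrac{1}{2 I_K(z)}\langle -J\dot{\tilde\gamma}(t), \tilde\gamma(t)\rangle$ pointwise, which integrates, using $\int \langle -J\dz, z\rangle dt = 1$ and $\int \dz\, dt = 0$, to $\int_0^1 g_K^2(\tilde\gamma) dt = 16 I_K(z)$. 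The decisive step is that $g_K^2 \circ \tilde\gamma$ is in fact \emph{constant}: on the interior of a single facet $F_i$, $g_K^2$ is smooth with $\nabla g_K^2$ proportional to $n_i$, so the chain rule gives $\tfrac{d}{dt}g_K^2(\tilde\gamma) \propto \langle n_i, J n_i\rangle = 0$; while on any lower-dimensional face of $\partial K$ on which $\tilde\gamma$ dwells for positive measure, $\dot{\tilde\gamma}$ is tangent to the face and hence perpendicular to every facet normal generating the polyhedral cone $\partial g_K^2(\tilde\gamma)$, so again $\tfrac{d}{dt}g_K^2(\tilde\gamma) = 0$ a.e. Thus $g_K^2(\tilde\gamma) \equiv 16 I_K(z)$, and rescaling by $C := 4\sqrt{I_K(z)}$ yields a loop $\gamma := \tilde\gamma/C$ with $g_K(\gamma) \equiv 1$, i.e.\ $\gamma \subset \partial K$, satisfying $\dg \in J N_K(\gamma)$. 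Hence $\gamma$ is a closed characteristic, and $z = \lambda \gamma + b$ with $\lambda = 1/(2\sqrt{I_K(z)})$ and $b = -\alpha/(8 I_K(z))$. Since the action transforms as $A(\lambda\gamma + b) = \lambda^2 A(\gamma)$ for a closed loop, a one-line computation gives $A(\gamma) = A(\tilde\gamma)/C^2 = 32 I_K(z)^2 / (16 I_K(z)) = 2 I_K(z)$.

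For the converse direction, given any closed characteristic $\gamma$ on $\partial K$, I run the affine substitution in reverse: pick $\lambda > 0$ and $b \in \R^{2n}$ so that $z := \lambda\gamma + b$ has zero mean of $\dz$ (automatic from closedness of $\gamma$) and symplectic area equal to $1$, then trace back each equivalence to verify $z \in \mathcal{E}^\dagger$ with multiplier $8 I_K(z)$. The last sentence of the lemma follows because $I_K$ is convex, coercive and lower semicontinuous on $\mathcal{E}$, so it attains its minimum; any such minimizer is in particular a critical point, hence in $\mathcal{E}^\dagger$, and the bijection together with the action identity gives a closed characteristic with minimal action. The main obstacle is the Hamiltonian conservation $g_K^2(\tilde\gamma) \equiv \text{const}$ in the non-smooth setting; in the smooth case this is the usual invariance of the Hamiltonian along its flow, but here one must combine the polyhedral structure of $\partial g_K^2$ with a measurable-selection chain rule for convex compositions along a merely absolutely continuous curve.
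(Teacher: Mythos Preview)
The paper does not supply its own proof of this lemma; it is quoted from the literature (stated there as Lemma~5.1 of \cite{singular_capacity}). So there is nothing within this paper to compare your argument against.

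On its own merits, your outline is the standard Clarke dual-action argument and is essentially correct. Two small points. First, Euler's identity for the $2$-homogeneous $g_K^2$ reads $\langle \xi, x\rangle = 2g_K^2(x)$ for $\xi \in \partial g_K^2(x)$, so the pointwise formula should be $g_K^2(\tilde\gamma(t)) = \tfrac{1}{4 I_K(z)}\langle -J\dot{\tilde\gamma}(t), \tilde\gamma(t)\rangle$ rather than $\tfrac{1}{2I_K(z)}$; your integrated value $16\,I_K(z)$ and the subsequent rescaling are nonetheless correct. Second, the Hamiltonian-conservation step admits a cleaner argument that avoids the facet/face case split: for convex $H$ and absolutely continuous $\tilde\gamma$, the composition $H\circ\tilde\gamma$ is absolutely continuous and, at almost every $t$, one has $(H\circ\tilde\gamma)'(t) = \langle \xi, \dot{\tilde\gamma}(t)\rangle$ for \emph{every} $\xi \in \partial H(\tilde\gamma(t))$ (the subgradient inequality, divided by $h$ and sent to $0^\pm$, gives both one-sided bounds); choosing the particular $\xi$ with $\dot{\tilde\gamma}(t) \in c\,J\xi$ yields $(H\circ\tilde\gamma)'(t) = c\langle \xi, J\xi\rangle = 0$. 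Your polyhedral argument also works (finitely many faces, so the set of times spent on faces with zero dwell time has measure zero), it is just more elaborate than necessary.
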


\section{Action minimizing orbits on polytopes}
\label{proof_section}

We start with the proof of Theorem \ref{simple_loop_theorem}. Let us first describe the idea of the proof. 
We start from a closed characteristic with minimal action, and consider its corresponding element $z \in \mathcal{E}^\dagger$ (see Lemma \ref{dual_bijection_lemma}). We then approximate it with a certain sequence of piecewise affine loops. By piecewise affine we mean that the velocity of the loop $z$ can be written as $\dz(t) = \sum_{j=1}^m \Id_{I_j}(t) w_j$ for almost every $t \in [0,1]$, where $(I_j)_{j=1}^m$ is a partition of $[0,1]$ into intervals (see Definition \ref{partition_definition} below) and $(w_j)_{j=1}^m$ is a finite sequence of vectors which we call the velocities of $z$.
Our goal is to construct from each piecewise affine loop in the approximating sequence a new simple loop in the sense of the requirements of Theorem \ref{simple_loop_theorem}, i.e. that the sequence $(w_j)_{j=1}^m$ is composed of positive multiples of $J n_i$, where $n_i$ is some outer normal vector to a $2n-1$-dimensional facet of $K$, and that for each $i = 1,\ldots \kF$ there is at most one $j$ so that $w_j$ is a positive multiple of $J n_i$. The limit of these simple loops gives us the desired minimizer of $I_K$ and by invoking Lemma \ref{dual_bijection_lemma} again we get the desired closed characteristic.
In order to construct a simple loop from each piecewise affine loop $z$, we make two changes to it which are described in Lemma \ref{no_linear_combination_lemma} and Lemma \ref{one_speed_lemma} below. 
Recall that the velocities $w_j$ are positive linear combinations of $J n_i$, $i=1,\ldots,\kF$, and additionally, maybe after a reparametrization, one can write $w_j = c \sum_{i=1}^l a_{j_i} p_{j_i}$, $\sum_{i=1}^l a_{j_i} = 1$, where $c > 0$ is a constant independent of $i$, and $p_i,i=1,\ldots,\kF$ are the vectors described in Section \ref{preliminary_polytopes_section} above.
The first change, roughly speaking, takes a time segment $I$ of the loop $z$ where the velocity is a convex combination of $\{c \cdot p_{j_i}\}_{i=1}^l$ and changes it to a sequence of $l$ segments where in each segment the velocity is $c \cdot p_{j_i}$, and the time of each segment is $a_{j_i} |I|$ (see Figure \ref{fig1}).
\begin{figure}
	\includegraphics[scale=0.6]{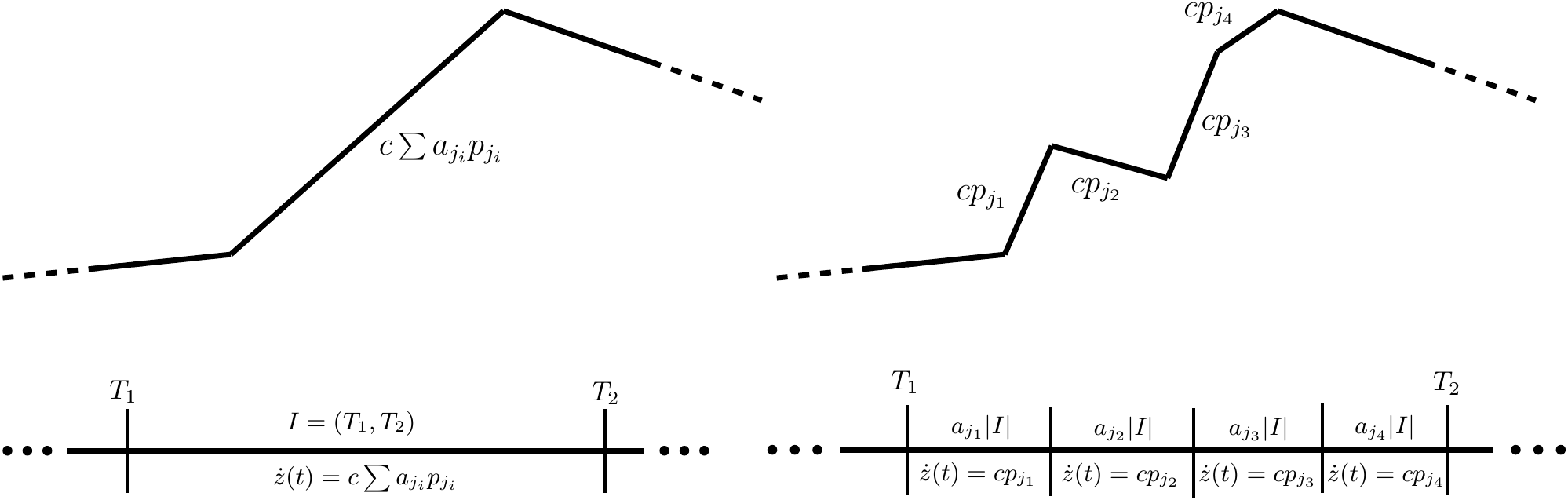}
	\caption{Description of the first change to the loop $z	$: We break a convex combination and move in each velocity separately}
\label{fig1}
\end{figure}
In addition, we show that one can choose the order of $\{ p_{j_i} \}_{i=1}^l$ to make sure that the value of $\int_0^1 \langle -J \dz,z \rangle dt$ does not decrease.
The second change changes the order of the velocities and, roughly speaking, moves all the time segments where the velocities are proportional to a certain $J n_i$ to become adjacent to one another (see Figure \ref{fig2}).
\begin{figure}
\includegraphics[scale=0.6]{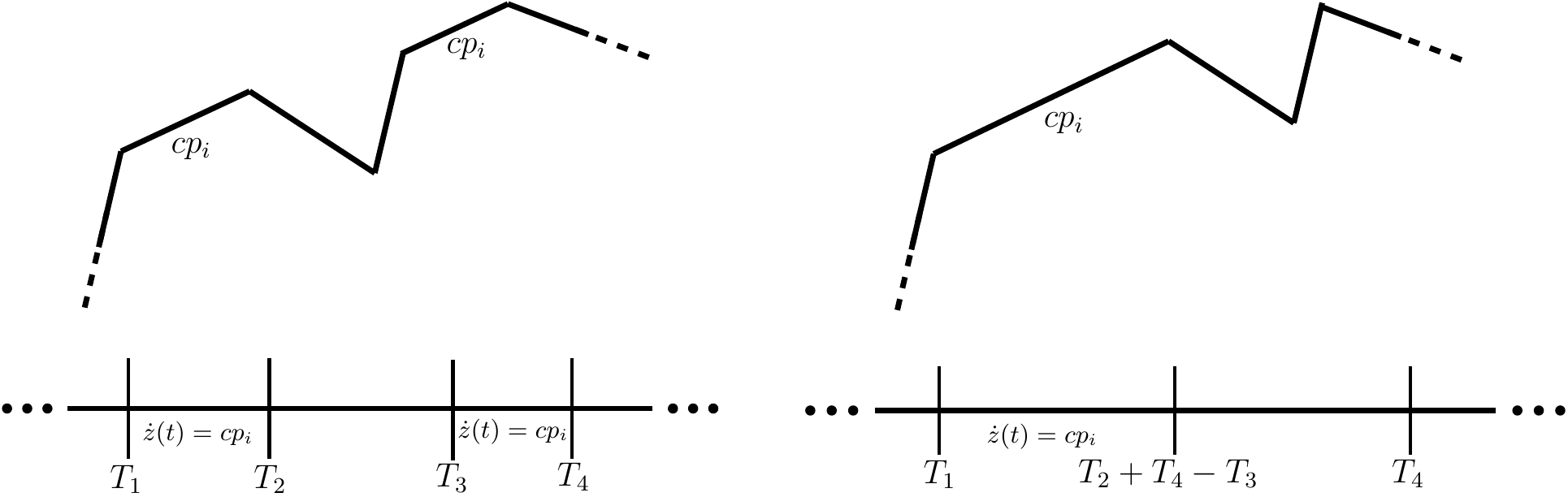}
\caption{Description of the second change to the loop $z$: We bring segments of the loop where it moves in the same velocity together}
\label{fig2}
\end{figure}
This change thus ensures that the set $\{ t : \dz(t) \text{ is a positive multiple of } J n_i \}$ is connected for every $i =1,\ldots,\kF$, i.e. that for each $i$, there is at most one $j$ so that $w_j$ is a positive multiple of $J n_i$. In addition, one can do this change while ensuring that the value of $\int_0^1 \langle -J \dz,z \rangle dt$ does not decrease.
Finally, after dividing the simple loop by $\int_0^1 \langle -J \dz,z \rangle dt$, one gets an element in $\mathcal{E}$ whose value under $I_K$ does not increase, and hence it is still a minimizer. This loop, by virtue of Lemma \ref{dual_bijection_lemma}, gives the required simple closed characteristic.

We begin by describing the piecewise affine approximation.

\begin{lemma}
\label{lemma_affine_approximation}
Fix a set of vectors $v_1,\ldots,v_k \in \R^{2n}$.
Suppose $z \in W^{1,2}([0,1],\R^{2n})$ satisfies that for almost every $t \in [0,1]$,
one has $\dz(t) \in \text{conv}\{v_1,\ldots,v_k\}$.
Then for every $\varepsilon > 0$, there exists a piecewise affine function $\zeta$ with $\|z - \zeta \|_{1,2} < \varepsilon$, and so that $\dot{\zeta}$ is composed of vectors from the set $\text{conv}\{v_1,\ldots,v_k\}$, and $\zeta(0) = z(0), \zeta(1) = z(1)$.
\end{lemma}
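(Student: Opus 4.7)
The plan is to construct $\zeta$ by approximating $\dz$ in $L^2$ by a step function whose values all lie in $C := \mathrm{conv}\{v_1,\ldots,v_k\}$, and then integrating this step function starting from $z(0)$. The convexity of $C$ suggests using block averages of $\dz$: the average of an a.e.-$C$-valued function over an interval is automatically in $C$, and using averages also guarantees that the endpoint condition $\zeta(1)=z(1)$ holds for free, with no further adjustment needed.

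Concretely, for each integer $N \geq 1$ I would partition $[0,1]$ into $N$ equal subintervals $I_j^N = [(j-1)/N,\,j/N]$, and define a step function
\[
\phi_N(t) \;=\; N \int_{I_j^N} \dz(s)\,ds \quad \text{for } t \in I_j^N, \; j = 1,\ldots,N.
\]
Since $\dz(s) \in C$ for a.e.\ $s$ and $C$ is convex and closed, each average lies in $C$, so $\phi_N$ takes values in $C$. Setting $\zeta_N(t) := z(0) + \int_0^t \phi_N(s)\,ds$ gives a piecewise affine loop with $\dot\zeta_N = \phi_N$ taking values in $C$; moreover the averaging identity $\int_{I_j^N} \phi_N = \int_{I_j^N} \dz$ yields $\zeta_N(1) = z(0) + \int_0^1 \dz = z(1)$, so both endpoint constraints are met.

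It remains to show $\|\zeta_N - z\|_{1,2} \to 0$. For the derivative part, the averaging operators $\dz \mapsto \phi_N$ are the conditional expectations with respect to the dyadic-like $\sigma$-algebras generated by the partitions, so $\phi_N \to \dz$ in $L^2([0,1],\R^{2n})$ by a standard density/martingale argument (equivalently, step functions are dense in $L^2$ and the averaging operators are contractions that converge to the identity on a dense subset, hence everywhere). For the $L^2$ distance between $\zeta_N$ and $z$, I would use
\[
\zeta_N(t) - z(t) \;=\; \int_0^t \bigl(\phi_N(s) - \dz(s)\bigr)\,ds,
\]
and estimate via Cauchy--Schwarz to get $\|\zeta_N - z\|_{L^2}^2 \leq \|\phi_N - \dz\|_{L^2}^2$, which also vanishes in the limit. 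Choosing $N$ large enough so that $\|\phi_N - \dz\|_{L^2} < \varepsilon/\sqrt{2}$, the loop $\zeta := \zeta_N$ then satisfies all the required properties.

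The construction is essentially routine once one notices the two key points: (i) $C$ is convex, so pointwise averages stay inside $C$; and (ii) using \emph{averages} over a partition, rather than any other interpolation, automatically preserves the integral $\int_0^1 \dz$, which is exactly what encodes the endpoint condition $\zeta(1) = z(1)$. There is no real obstacle; the only subtlety worth verifying carefully is the $L^2$-convergence of the step-function approximations $\phi_N \to \dz$, which is standard but deserves a one-line justification invoking density of step functions in $L^2$.
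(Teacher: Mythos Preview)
Your proposal is correct and follows essentially the same approach as the paper: both take the piecewise linear interpolant of $z$ at a partition of $[0,1]$, observe that the resulting derivative on each subinterval is the average of $\dz$ there and hence lies in the closed convex set $C$, and then argue $W^{1,2}$-closeness. The only difference is that the paper outsources the $W^{1,2}$-approximation step to a reference and justifies ``average lies in $C$'' via Riemann sums, whereas you give a self-contained argument via $L^2$-convergence of block averages; your version is arguably cleaner.
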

\begin{proof}
Let $\varepsilon > 0$. 
Using \cite{schaftingen}, there exists a partition $0=t_1<t_2<\ldots<t_m = 1$ of $[0,1]$ so that the piecewise affine function $\zeta$ defined by the requirements that for each $i=1,\ldots,m-1$, the restriction $\zeta |_{(t_i,t_{i+1})}$ is affine, and $\zeta(t_i) = z(t_i)$, satisfies that $\|z - \zeta \|_{1,2} < \varepsilon$.
We are left with showing that $\dot{\zeta}(t) \in \text{conv}\{v_1,\ldots,v_k\}$.
Note that for $t \in (t_i,t_{i+1})$,
\[ \dot{\zeta}(t) = \frac{z(t_{i+1}) - z(t_i)}{t_{i+1} - t_i} = \frac{\int_{t_i}^{t_{i+1}} \dz(t) dt}{t_{i+1} - t_i} .\]
It is a standard fact that there exists a sequence $\left(\xi_j^{(N)}\right)_{N \in \N, j \in \{1,\ldots,N\}}$ so that 
$\frac{\xi_j^{(N)} - t_i}{t_{i+1}-t_{i}} \in [\frac{j-1}{N},\frac{j}{N}]$, and
\[ \frac{\int_{t_i}^{t_{i+1}} \dz(t) dt}{t_{i+1} - t_i} = \lim_{N \to \infty} \sum_{j=1}^N \frac{\dz(\xi_j^{(N)})}{N} .\]
Note that for each $N$, one has $\sum_{j=1}^N \frac{\dz(\xi_j^{(N)})}{N} \in \text{conv}(\text{Im}(\dz)) \subset \text{conv}\{v_1,\ldots,v_k\}$. This observation together with the fact that $\text{conv}\{v_1,\ldots,v_k\}$ is closed, gives $\dot{\zeta}(t) \in \text{conv}\{v_1,\ldots,v_k\}$. 

\end{proof}

\begin{definition}
\label{partition_definition}
We call a finite sequence of disjoint open intervals $(I_i)_{i=1}^m$ a partition of $[0,1]$, if there exists an increasing sequence of numbers $0 = \tau_0 \leq \tau_1 \leq \ldots \leq \tau_m = 1,$ with $I_i = (\tau_{i-1},\tau_i)$. 
\end{definition}

The following proposition will be helpful later.
\begin{proposition}
\label{technicalProp}
Let $z \in W^{1,2}([0,1],\R^{2n})$ be a closed loop such that $\dz(t) = \sum_{i=1}^m \Id_{I_i}(t) w_i$ almost everywhere, where $\left(I_i = (\tau_{i-1},\tau_i) \right)_{i=1}^m$  is a partition of $[0,1]$, and $w_1,\ldots,w_m \in \R^{2n}$.  Then
\[ \int_0^1 \langle -J \dz, z \rangle dt = \sum_{i=1}^m \sum_{j = 1}^{i-1} |I_j| |I_i| \omega(w_i,w_j). \]
\end{proposition}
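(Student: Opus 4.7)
The plan is to use the piecewise constancy of $\dz$ to write $z(t)$ explicitly on each subinterval of the partition, split the integral accordingly, and then invoke the closed-loop hypothesis to cancel a boundary-value term. Concretely, integrating $\dz$ from $0$ to any $t \in I_i = (\tau_{i-1},\tau_i)$ yields
\[ z(t) = z(0) + \sum_{k=1}^{i-1} |I_k| w_k + (t-\tau_{i-1}) w_i. \]
Substituting this into $\int_{I_i} \langle -Jw_i, z(t)\rangle\,dt$ produces three kinds of contributions: a constant piece $|I_i|\langle -Jw_i, z(0)\rangle$, cross terms $|I_i|\sum_{k<i}|I_k|\langle -Jw_i,w_k\rangle$, and a diagonal piece whose integrand is proportional to $\langle -Jw_i,w_i\rangle$.

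Next I would observe two clean cancellations that collapse the expression to the stated sum. First, the diagonal piece vanishes since $\langle -Jw_i,w_i\rangle = \omega(w_i,w_i) = 0$ by antisymmetry of $\omega$. Second, summing the constant pieces over $i$ produces
\[ \left\langle -J \sum_{i=1}^m |I_i| w_i,\ z(0) \right\rangle, \]
which is zero precisely because $z$ is a closed loop: $\sum_i |I_i| w_i = \int_0^1 \dz\, dt = z(1)-z(0) = 0$. What remains is $\sum_{1 \le k < i \le m} |I_k||I_i| \langle -Jw_i, w_k\rangle$, and applying the identity $\langle -Ju,v\rangle = \omega(u,v)$ (which follows from the convention relating the standard symplectic form $\omega$ and the complex structure $J$) rewrites this as $\sum_{j<i} |I_j||I_i|\omega(w_i,w_j)$, as claimed.

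The proof is essentially a direct computation, so there is no serious obstacle. The main point worth flagging is that the closed-loop hypothesis is used in exactly one place — to kill the base-point-dependent contribution $\langle -J \sum |I_i|w_i,\, z(0)\rangle$ — and this is what turns the integral into a purely combinatorial quantity depending only on the durations $|I_i|$ and the velocities $w_i$, independent of the basepoint $z(0)$. This independence is precisely why the formula can be used downstream to reduce the dual action principle to the finite-dimensional maximization appearing in Theorem \ref{formula_theorem}.
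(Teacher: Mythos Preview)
Your proof is correct and follows essentially the same route as the paper: write $z(t) = z(0) + \int_0^t \dz$, split the integral over the partition intervals, and simplify. If anything, your write-up is slightly more explicit than the paper's, which drops the $z(0)$ term and the diagonal $\langle -Jw_i, w_i\rangle$ term silently between consecutive displayed equalities; you correctly identify that the former vanishes exactly by the closed-loop hypothesis and the latter by antisymmetry of $\omega$.
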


\begin{proof}
\begin{align*}
\int_0^1 \langle -J \dz, z \rangle dt  &= \int_0^1 \langle -J \dz , z(0) + \int_0^t \dz(s) ds \rangle dt \\
 &= \int_0^1 \langle -J \sum_{l=1}^m \Id_{I_l}(t) w_l , \int_0^t \sum_{l=1}^m \Id_{I_l}(s) w_l ds \rangle dt \\
 &= \sum_{i=1}^m \int_{I_i} \langle -J \sum_{l=1}^m \Id_{I_l}(t) w_l , \int_0^{\tau_{i-1}} \sum_{l=1}^m \Id_{I_l}(s) w_l ds + \int_{\tau_{i-1}}^t w_i ds \rangle dt \\
 &= \sum_{i=1}^m \int_{I_i} \langle -J w_i , \sum_{j=1}^{i-1} \int_{I_j} \sum_{l=1}^m \Id_{I_l}(s)w_l ds + (t-\tau_{i-1})w_i \rangle dt \\
 &= \sum_{i=1}^m \int_{I_i} \langle -J w_i , \sum_{j=1}^{i-1} \int_{I_j} w_j ds \rangle dt = \sum_{i=1}^m \sum_{j=1}^{i-1} |I_i| |I_j| \omega(w_i,w_j).
\end{align*}
\end{proof}

\begin{lemma}
\label{no_linear_combination_lemma}
Fix a set of vectors $ v_1, \ldots, v_k \in \R^{2n} $. Let $z \in W^{1,2}([0,1],\R^{2n})$ be a piecewise affine loop, where $\dz(t) \in \text{conv}\{v_1,\ldots,v_{{k}}\}$ for almost every $t \in [0,1]$, then there exists another piecewise affine loop $z' \in W^{1,2}([0,1],\R^{2n})$ so that $\dz'(t) \in \{ v_1,\ldots,v_{{k}}\}$ for almost every $t$, and 
\[ \int_0^1 \langle -J \dz',z' \rangle dt \geq \int_0^1 \langle -J \dz, z \rangle dt .\]
\end{lemma}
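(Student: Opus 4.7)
The plan is to process the piecewise affine loop $z$ one linear piece at a time, replacing each piece on which $\dz$ is a strict convex combination of the $v_i$'s by a short concatenation of pieces with pure vertex velocities, chosen so that the symplectic action functional $\int_0^1 \langle -J\dz, z\rangle\, dt$ does not decrease. Write the partition of $[0,1]$ underlying $z$ as $(I_1,\ldots,I_m)$ with constant velocities $w_1,\ldots,w_m$, and pick any index $l$ with $w_l = \sum_{i=1}^k a_i v_i$, $a_i \geq 0$, $\sum a_i = 1$. For a permutation $\pi \in S_k$, form a new loop $z_\pi$ by subdividing $I_l$ into $k$ consecutive intervals of lengths $a_{\pi(1)}|I_l|,\ldots,a_{\pi(k)}|I_l|$, with constant velocities $v_{\pi(1)},\ldots,v_{\pi(k)}$ respectively, and leaving everything outside $I_l$ unchanged. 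Since $\sum_i a_{\pi(i)} v_{\pi(i)} |I_l| = w_l |I_l|$, the total displacement over $I_l$ is preserved, so $z_\pi$ is still a loop with the same values outside $I_l$ as $z$.

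Applying Proposition \ref{technicalProp} to both $z$ and $z_\pi$, all cross terms between $I_l$ and the other intervals $I_j$ remain unchanged, because for any outside interval $I_j$ the aggregated contribution is
\[
|I_j|\sum_{i=1}^k a_{\pi(i)}|I_l|\,\omega(v_{\pi(i)}, w_j) = |I_j||I_l|\,\omega(w_l, w_j),
\]
and symmetrically for $I_j$ placed after $I_l$. The only change is the appearance of a new self-contribution inside $I_l$ equal to
\[
\Delta(\pi) := |I_l|^2 \sum_{1 \leq s < r \leq k} a_{\pi(s)}\, a_{\pi(r)}\, \omega\bigl(v_{\pi(r)}, v_{\pi(s)}\bigr).
\]
It therefore suffices to exhibit some $\pi$ with $\Delta(\pi) \geq 0$, and this I will do by an averaging argument: for each unordered pair $\{p,q\} \subset \{1,\ldots,k\}$ with $p \neq q$, exactly $k!/2$ permutations place $p$ before $q$ and $k!/2$ place $q$ before $p$, so the contribution of the pair $\{p,q\}$ to $\sum_{\pi \in S_k} \Delta(\pi)$ equals $\tfrac{k!}{2} a_p a_q \bigl(\omega(v_q,v_p) + \omega(v_p,v_q)\bigr) = 0$ by antisymmetry of $\omega$. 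Hence $\sum_{\pi} \Delta(\pi) = 0$ and there exists $\pi^* \in S_k$ with $\Delta(\pi^*) \geq 0$; replace $z$ by $z_{\pi^*}$.

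Iterating this procedure over the finitely many pieces of the original partition (pieces already carrying a pure vertex velocity are left alone, and sub-intervals of length zero arising from $a_i = 0$ are discarded) terminates after finitely many steps in a piecewise affine loop $z'$ whose velocity lies almost everywhere in $\{v_1,\ldots,v_k\}$, and whose action satisfies $\int_0^1 \langle -J\dot{z'}, z'\rangle\, dt \geq \int_0^1 \langle -J\dz, z\rangle\, dt$. The main obstacle is really the bookkeeping in the second paragraph: checking that subdividing one interval affects only the self-interaction term and not the cross terms in the formula from Proposition \ref{technicalProp}, so that the averaging argument is applied to a clean quadratic form whose antisymmetry produces the desired permutation.
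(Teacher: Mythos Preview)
Your proof is correct and follows essentially the same approach as the paper: subdivide each affine piece into pure-vertex segments, use Proposition \ref{technicalProp} to see that only the internal self-interaction term changes, and then choose an ordering of the $v_i$'s making that term non-negative. The only minor difference is in this last step: the paper simply observes that reversing the order of the $v_{i_j}$'s flips the sign of $\sum_{r<s} a_{i_r} a_{i_s}\,\omega(v_{i_s},v_{i_r})$, so one of the two orientations works, whereas you reach the same conclusion by averaging over all of $S_k$.
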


\begin{proof}
The idea of the proof is to replace any convex combination of $\{v_i\}_{i=1}^k$ in the velocity of $z$ by moving in each velocity $v_i$ separately.
Write $\dz(t) = \sum_{j=1}^m \Id_{I_j}(t) w_j$, where for each $j$, $w_j \in \text{conv}\{v_1,\ldots,v_{{k}}\}$, and $\left(I_j\right)_{j=1}^m$ is a partition of $[0,1]$. Suppose that $w_i = \sum_{j=1}^{l} a_{i_j} v_{i_j}$, where $a_{i_j} > 0$, $i_j \in \{1,\ldots,k\}$, and $l \in \mathbb{N}$ dependent on $i$.
Note that $\sum_{j=1}^{l} a_{i_j} = 1$. 
Consider the partition of $I_i$ to disjoint subintervals $I_{i_j} \subset I_i$ for every $j=1,\ldots,l$ where the length of $I_{i_j}$ is $|I_{i_j}| = a_{i_j} |I_i|$.
Define the following loop
\begin{equation}
\label{z_velocites_equation}
\dz'(t) = \sum_{j=1}^{i-1} \Id_{I_j}(t) w_j + \sum_{j=1}^l \Id_{I_{i_j}}(t) v_{i_j} + \sum_{j=i+1}^m \Id_{I_j}(t) w_j . 
\end{equation}
We shall specify the order of the subintervals $I_{i_j}$'s and the velocities $v_{i_j}$'s appearing in \eqref{z_velocites_equation} later. 
It follows immediately that $\int_0^1 \dz'(t) dt = \int_0^1 \dz(t) dt = 0$. 
Next we show that, if the order of the vectors $v_{i_j}$ is properly chosen, then
\begin{equation*}
\int_0^1 \langle -J \dz' , z' \rangle dt \geq \int_0^1 \langle -J \dz, z \rangle dt .
\end{equation*}
Indeed, by Proposition \ref{technicalProp},
\begin{align*}
\int_0^1 \langle -J \dz' , z' \rangle dt & = \sum_{\substack{r < s \\ r,s \neq i}} |I_r| |I_s| \omega(w_s,w_r) + \sum_{j=1}^l \sum_{r<i} |I_r| |I_i| a_{i_j} \omega(v_{i_j}, w_r)  &\\
 & + \sum_{j=1}^l \sum_{r > i} |I_r| |I_i| a_{i_j} \omega(w_r, v_{i_j}) +  \sum_{1 \leq r < s \leq l} |I_i|^2 a_{i_r} a_{i_s} \omega(v_{i_s},v_{i_r}) &\\
& = \sum_{\substack{r < s \\ r,s \neq i}} |I_r| |I_s| \omega(w_s,w_r) + \sum_{r<i} |I_r| |I_i| \omega(w_i, w_r) &\\
& + \sum_{r>i} |I_r| |I_i| \omega(w_r, w_i) +  \sum_{1 \leq r < s \leq l} |I_i|^2 a_{i_r} a_{i_s} \omega(v_{i_s},v_{i_r}) &\\
& = \int_0^1 \langle -J \dz , z \rangle dt + \sum_{1 \leq r < s \leq l} |I_i|^2 a_{i_r} a_{i_s} \omega(v_{i_s},v_{i_r}). &\\ 
\end{align*}
Finally, we wish to prove that 
\begin{equation}
\label{positive_sum}
\sum_{1 \leq r < s \leq l} a_{i_r} a_{i_s} \omega(v_{i_s},v_{i_r}) \geq 0. 
\end{equation}
Note that we are free to select the order of $v_{i_1}, v_{i_2}, \ldots, v_{i_l}$. If we reverse the order of the velocities we get that the sum in \eqref{positive_sum} changes sign. Therefore, by rearranging the $v_{i_j}$'s in \eqref{z_velocites_equation} one can choose the order so that inequality \eqref{positive_sum} would hold.
By applying this argument to all intervals $I_i$ one gets the thesis.
\end{proof}

\begin{lemma}
\label{one_speed_lemma}
Fix a finite sequence of pairwise distinct vectors $(v_1,\ldots,v_k).$ Let $z \in W^{1,2}([0,1],\R^{2n})$ be a piecewise affine loop so that $\dz(t) = \sum_{i=1}^m \Id_{I_i}(t) w_i$, where $\left( I_i = (\tau_{i-1},\tau_i) \right)_{i=1}^m$ is a partition of $[0,1]$, and for each $i$, $w_i \in \{v_1,\ldots,v_k\}$. Then there exists another piecewise affine loop $z'$ so that $\dz'(t) \in \{v_1,\ldots,v_k\}$ for almost every $t$, and $\{t : \dz'(t) = v_j\}$ is connected for every $j = 1,\ldots,k$. In addition,
\[ \int_0^1 \langle -J \dz',z' \rangle dt \geq \int_0^1 \langle -J \dz, z \rangle dt .\] 

\end{lemma}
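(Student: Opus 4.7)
The strategy is to recast the problem combinatorially via Proposition \ref{technicalProp}. Write $\dz(t)=\sum_{i=1}^m \Id_{I_i}(t)w_i$, and call a maximal run of consecutive intervals with a common velocity a \emph{block}. I will produce $z'$ by a finite sequence of permutations of the intervals $(I_i)$; since only the order of the velocity-length pairs changes, $\int_0^1 \dz'\,dt=\sum_i|I_i|w_i=0$, so setting $z'(0)=z(0)$ gives a piecewise affine closed loop with $\dz'(t)\in\{v_1,\ldots,v_k\}$ a.e. The goal is to reduce the number of blocks so that each velocity $v_j$ appears in at most one block; then $\{t:\dz'(t)=v_j\}$ is either empty or a single subinterval, hence connected.

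\textbf{Step 1: effect of an adjacent swap.} By Proposition \ref{technicalProp}, the action equals $\sum_{r<s}|I_r||I_s|\omega(w_s,w_r)$. Swapping the positions of $I_i$ and $I_{i+1}$ (carrying their lengths and velocities) leaves every pair-contribution involving an index $q\notin\{i,i+1\}$ unchanged, because those contributions only depend on $|I_q|$, $w_q$ and on whether the other index is smaller or larger than $q$, and the set $\{i,i+1\}$ is preserved. The pair $(i,i+1)$ itself contributes $|I_i||I_{i+1}|\omega(w_{i+1},w_i)$ before the swap and $|I_i||I_{i+1}|\omega(w_i,w_{i+1})$ after. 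Hence the action changes by exactly $2|I_i||I_{i+1}|\omega(w_i,w_{i+1})$.

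\textbf{Step 2: the left-or-right trick.} Suppose two blocks $B_r$ and $B_s$ ($r<s$) share a velocity $v$, separated by blocks $B_{r+1},\ldots,B_{s-1}$ of lengths $a_i$ and velocities $u_i$. Bubbling $B_s$ leftward by successive adjacent swaps until it sits just to the right of $B_r$ changes the action, by Step 1, by
\[
\Delta_{\text{left}} = 2a_s\sum_{i=r+1}^{s-1}a_i\,\omega(u_i,v) =: 2a_s S.
\]
Bubbling $B_r$ rightward until it sits just to the left of $B_s$ changes the action by $\Delta_{\text{right}} = 2a_r\sum_{i=r+1}^{s-1}a_i\,\omega(v,u_i) = -2a_r S$. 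Since $\Delta_{\text{left}}$ and $\Delta_{\text{right}}$ have opposite signs (as $a_r,a_s>0$), at least one of the two moves produces a rearrangement whose action is $\geq$ the original.

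\textbf{Step 3: merge and iterate.} Once $B_r$ and $B_s$ have been brought adjacent, treat them as a single block of velocity $v$ and length $a_r+a_s$: the internal pair contributes $a_ra_s\omega(v,v)=0$, and for any other block $B_q$ the sum of the two contributions involving $B_r,B_s$ equals the single contribution obtained after merging. So merging leaves the action unchanged and strictly decreases the number of blocks. Since there are finitely many blocks to begin with, after finitely many applications of Steps 2 and 3 each velocity $v_j$ appears in at most one block, and the resulting loop $z'$ satisfies all the conclusions of the lemma.

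\textbf{Main obstacle.} The non-routine ingredient is Step 2. Neither direction of bubbling is monotone on its own — transposing two adjacent distinct velocities $w_i,w_{i+1}$ can decrease the action when $\omega(w_i,w_{i+1})<0$. The key observation is that the two global moves (moving $B_s$ all the way left versus moving $B_r$ all the way right) produce increments with opposite signs and positive coefficients $2a_s$ and $-2a_r$, so one of them is forced to be nonnegative regardless of the intermediate velocities. Everything else reduces to direct bookkeeping with Proposition \ref{technicalProp}.
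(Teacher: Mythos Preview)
Your proof is correct and follows essentially the same approach as the paper. The paper also picks two same-velocity pieces $I_r,I_s$, directly computes (via Proposition~\ref{technicalProp}) that teleporting $I_s$ next to $I_r$ changes the action by $\sum_{i=r+1}^{s-1}2|I_s||I_i|\omega(w_i,w_s)$ while teleporting $I_r$ next to $I_s$ gives the opposite-sign quantity $\sum_{i=r+1}^{s-1}2|I_r||I_i|\omega(w_r,w_i)$, chooses the nondecreasing option, and iterates; your version just factors the teleport into adjacent swaps and works at the level of blocks rather than intervals, which is a cosmetic difference in bookkeeping.
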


\begin{proof}
Assume that for some $r<s$ one has $w_r = w_s$, consider a rearrangement of the intervals $I_i$ where we erase the interval $I_s$ and increase the length of the interval $I_r$ by $|I_s| = \tau_s - \tau_{s-1}$, more precisely,
\[ I_i' = 
\left\{ \begin{matrix} 
(\tau_{i-1},\tau_i), & & i < r \\
(\tau_{i-1},\tau_i + \tau_s - \tau_{s-1}), & & i = r \\
(\tau_{i-1} + \tau_s - \tau_{s-1}, \tau_i + \tau_s - \tau_{s-1}), & & r < i < s \\
\emptyset, & & i = s \\
(\tau_{i-1},\tau_i), & & i > s
\end{matrix} \right. 
\]
Now define $z'$ by $\dz'(t) = \sum_{i=1}^m \Id_{I_i'}(t) w_i$. 
We will show that the action of this loop $z'$ or the analogous loop $z''$ which is defined by erasing $I_r$ and increasing the length of $I_s$ by $|I_r|$ is not smaller than the action of $z$.
First note that 
\[ 0 = \int_0^1 \dz dt = \sum_{i=1}^m |I_i| w_i , \]
while
\[ \int_0^1 \dz' dt = \sum_{i=1}^m |I_i'| w_i .\]
Since $w_r = w_s$ the two sums are only different in the order of summation and thus equal.
Next, we claim that
\begin{equation}
\label{action_inequality}
\int_0^1 \langle -J \dz', z' \rangle dt \geq \int_0^1 \langle -J \dz, z \rangle dt.
\end{equation}
By Proposition \ref{technicalProp},
\begin{equation}
\label{action_z_equation}
\int_0^1 \langle -J \dz, z \rangle dt = \sum_{i=1}^m \sum_{j < i} |I_j| |I_i| \omega(w_i,w_j).
\end{equation}
Consider the change in $\int_0^1 \langle -J \dz, z \rangle dt$ after removing $I_s$ and adding $|I_s|$ to the length of $I_r$. Since $w_r = w_s$, the coefficient of $\omega(w_r,w_i)$ does not change for $i < r$ or $i > s$. For $r < i < s$ instead of the term $|I_s| |I_i| \omega(w_s,w_i)$ in \eqref{action_z_equation} we add $|I_s| |I_i| \omega(w_i, w_s)$ to the term $|I_r||I_i| \omega(w_i,w_r)$, so the action difference is 
\[\int_0^1 \langle -J \dz', z' \rangle dt - \int_0^1 \langle -J \dz, z \rangle dt  = \sum_{i = r+1}^{s-1} 2 |I_s| |I_i| \omega(w_i,w_s) .\]
Note that if one erases $I_r$ and increases the length of $I_s$ by $|I_r|$ instead, the action difference becomes
\[ \sum_{i = r+1}^{s-1} 2 |I_r| |I_i| \omega(w_r,w_i) , \]
 which has an opposite sign, and hence either $z'$ or $z''$ satisfies \eqref{action_inequality}.
Finally, we continue to join different disjoint intervals $I_r$,$I_s$ whenever $w_r = w_s=v_i$ by induction, until $\{t : \dz'(t) = v_i \}$ is connected for every $i=1,\ldots,k$.
\end{proof}

\begin{proposition}
\label{IKprop}
Let $K \subset \R^{2n}$ be a convex polytope so that the origin $0$ belongs to $K$. Let $\{n_i\}_{i=1}^{\kF}$ be the normal vectors to the $2n-1$-dimensional facets of $K$, and let $p_i = J \partial g_K^2 |_{F_i} = \frac{2}{h_i} J n_i$.
Recall that $h_i := h_K(n_i)$.
Let $c > 0$ be a constant and let $z \in \mathcal{E}$ be a loop that satisfies that for almost every $t$, there is a non-empty face of $K$, $F_{j_1} \cap \ldots \cap F_{j_l} \neq \emptyset$,  with $\dz(t) \in c \cdot \text{conv}\{p_{j_1},\ldots,p_{j_l}\}$.
Then
\[ I_K(z) = c^2 .\]
\end{proposition}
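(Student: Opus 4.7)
The plan is to show that the integrand $h_K^2(-J\dot{z}(t))$ is almost everywhere equal to the constant $4c^2$, after which the claim is immediate from the definition of $I_K$.

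First I would rewrite $-J p_i$ explicitly. Since $p_i = \frac{2}{h_i} J n_i$ and $J^2 = -\Id$, we have $-J p_i = \frac{2}{h_i} n_i$. Thus if $\dot{z}(t) = c \sum_{k=1}^{l} a_k p_{j_k}$ with $a_k \geq 0$ and $\sum_k a_k = 1$, then
\[
-J \dot{z}(t) = \sum_{k=1}^{l} \lambda_k n_{j_k}, \qquad \lambda_k := \frac{2 c \, a_k}{h_{j_k}} \geq 0.
\]

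The key input is that $h_K$ is linear on the non-negative cone spanned by the $n_{j_k}$'s corresponding to a non-empty face. Indeed, sublinearity of the support function already gives $h_K\bigl(\sum_k \lambda_k n_{j_k}\bigr) \leq \sum_k \lambda_k h_{j_k}$. For the reverse inequality I would pick any point $x_0 \in F_{j_1} \cap \ldots \cap F_{j_l}$, which exists by assumption; such a point satisfies $\langle x_0, n_{j_k} \rangle = h_{j_k}$ for each $k$, so
\[
h_K\!\left(\sum_{k=1}^l \lambda_k n_{j_k}\right) \geq \left\langle x_0, \sum_{k=1}^l \lambda_k n_{j_k} \right\rangle = \sum_{k=1}^l \lambda_k h_{j_k}.
\]
Plugging in $\lambda_k = 2 c a_k/h_{j_k}$ gives $h_K(-J\dot{z}(t)) = 2c \sum_k a_k = 2c$ for almost every $t$.

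Therefore $h_K^2(-J\dot{z}(t)) = 4c^2$ almost everywhere, and
\[
I_K(z) = \frac{1}{4} \int_0^1 h_K^2(-J\dot{z}(t)) \, dt = c^2.
\]
There is no real obstacle here; the only subtlety is the linearity of $h_K$ on the cone over a non-empty face, which is what makes it essential that the intersection $F_{j_1} \cap \ldots \cap F_{j_l}$ is assumed to be non-empty (otherwise the supremum defining $h_K$ on a combination of the $n_{j_k}$'s would not be attained at a common point).
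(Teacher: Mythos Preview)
Your proof is correct and follows essentially the same route as the paper: both arguments reduce to showing that $h_K(-J\dz(t))=2c$ almost everywhere by evaluating the support function at the combination $\sum_k \frac{2c\,a_k}{h_{j_k}} n_{j_k}$ and using a point in the common face $F_{j_1}\cap\cdots\cap F_{j_l}$ to realize the supremum. The only cosmetic difference is that you phrase the upper bound via sublinearity of $h_K$, whereas the paper computes the supremum directly, but the content is identical.
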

\begin{proof}
Fix $t_0 \in [0,1]$ and assume that $\dz(t_0) = c \cdot \sum_{i=1}^l a_i p_{j_i}$ for $a_i \geq 0$, $\sum_{i=1}^l a_i = 1$.
By the definition of $h_K$ one has
\[ h_K(-J \dz(t_0)) = h_K(2c \sum_{i=1}^l \frac{a_i}{h_{j_i}} n_{j_i} ) = \sup_{x \in K} \langle x , 2c \sum_{i=1}^l \frac{a_i}{h_{j_i}} n_{j_i} \rangle = 2c \sup_{x \in K} \sum_{i=1}^l \frac{a_i}{h_{j_i}} \langle x, n_{j_i} \rangle. \]
On the other hand $ \sup_{x \in K} \langle x , n_{j_i} \rangle = h_{j_i} $, and it is attained for every $x \in F_{j_i}$.
Hence for any choice of $y \in F_{j_1} \cap \ldots \cap F_{j_l}$, 
\[ \sup_{x \in K} \sum_{i=1}^l \frac{a_i}{h_{j_i}} \langle x , n_{j_i} \rangle = \sum_{i=1}^l \frac{a_i}{h_{j_i}}  \langle y , n_{j_i} \rangle = \sum_{i=1}^l a_i = 1. \]
Hence
\[ h_K(-J \dz(t)) = 2c,\]
for almost every $t$, and
\[ I_K(z) = \frac{1}{4} \int_0^1 h_K^2(-J\dz(t))dt = c^2.\]
\end{proof}

\begin{proof}[Proof of Theorem \ref{simple_loop_theorem}]
Since the existence of a closed characteristic with the desired properties is independent on translations, we assume without loss of generality that the origin $0$ belongs to $K$ (see also Remark \ref{translation_invariant_remark}).
Assume that $\gamma: [0,1] \to \partial K$ is a closed characteristic with minimal action such that
$ \dg(t) \in dJ \partial g^2_K(\gamma(t))$ for almost every $t$, where $d > 0$ is a constant independent of $t$ 
(recall that every closed characteristic equals upto a reparametrization to a solution to the Hamiltonian inclusion $ \dg(t) \in J \partial g^2_K(\gamma(t)) $ almost everywhere, and one can reparametrize by some constant $d$ to get $\gamma(0) = \gamma(1)$).
From Lemma \ref{dual_bijection_lemma} it follows that there is $z \in \mathcal{E}^\dagger$ such that $A(\gamma) = 2I_K(z)$, and $z = \lambda \gamma + b$, with some constants $\lambda \in \R^+, b \in \R^{2n}$. 
Note that $\dz(t) = \lambda \dg(t) \in \lambda d \cdot \text{conv}\{p_1,\ldots,p_{_{\kF}}\}$, and denote $c = \lambda d$. Moreover, $z$ satisfies the conditions of Proposition \ref{IKprop} and hence $I_K(z) = c^2$.
From Lemma \ref{lemma_affine_approximation} for every $N \in \mathbb{N}$ one can find a piecewise affine loop $\zeta_N$ such that $\| z - \zeta_N \|_{1,2} \leq \frac{1}{N}$ and $\dot{\zeta}_N(t) \in c \cdot \text{conv}\{p_1,\ldots,p_{_{\kF}}\}$ for almost every $t$. 
By applying first Lemma \ref{no_linear_combination_lemma} with $v_i=c p_i,i=1,\ldots,\kF$ to $\zeta_N$, and then take the result and apply to it Lemma \ref{one_speed_lemma} again with $v_i=c p_i,i=1,\ldots,\kF$, one gets a piecewise affine loop $z_N$ which can be written as 
\[ \dz_N(t) = \sum_{i=1}^{m_{_N}} \Id_{I^N_i} (t) v^N_i ,\]
where $v^N_i = c \cdot p_j$ for some $j \in \{1,\ldots,\kF\}$ and for every $j$ there is at most one such $i$.
Moreover one has
\[ A_N := \sqrt{ \int_0^1 \langle -J \dz_N, z_N \rangle dt } \geq \sqrt{ \int_0^1 \langle -J \dot{\zeta}_N ,\zeta_N \rangle dt }.\]
Hence denote $z_N' = \frac{z_N}{A_N} \in \mathcal{E}$, and write $w^N_i = \frac{v^N_i}{A_N}$ for the velocities of $z_N'$, and write $c_N = \frac{c}{A_N}$.
The fact that $\zeta_N \xrightarrow{N \to \infty} z$ implies that $\int_0^1 \langle -J \dot{\zeta}_N ,\zeta_N \rangle dt \xrightarrow{N \to \infty} 1$. Hence $\lim_{N \to \infty} A_N \geq 1$, and $\lim_{N \to \infty} c_N \leq c$. Moreover, from Proposition \ref{IKprop} and from the minimality of $I_K(z)$, one has $c_N^2 = I_K(z_N') \geq I_K(z) = c^2$, and hence $\lim_{N \to \infty} c_N = c$ and consequently $\lim_{N \to \infty} I_K(z_N') = I_K(z)$, and $\lim_{N \to \infty} A_N = 1$. (Note that $z_N'$ satisfies the conditions of Proposition \ref{IKprop} because each single $p_i$ trivially satisfies that the face $F_i$ of $K$ is non-empty.)

Consider the space $\mathcal{E}^1$ of piecewise affine curves $z'$, whose velocities are in the set $C \cdot \{p_1,\ldots,p_{_{\kF}}\}$ for some $C>0$ and each $p_i$ appears at most once. 
Let us define a map $\Phi : \mathcal{E}^1 \to S_{_{\kF}} \times \R^{\kF}$, $z' \mapsto (\sigma,(|I_1|,\ldots,|I_{_{\kF}}|))$, where
\[ \dz'(t) = \sum_{i=1}^{\kF} \Id_{I_i}(t) C \cdot p_{\sigma(i)} .\] 
A point in the image $(\sigma,(t_1,\ldots,t_{_{\kF}})) \in \text{Im}(\Phi)$ satisfies $t_i \geq 0$ for each $i$, and $\sum_{i=1}^{\kF} t_i = 1$, which implies that $\text{Im}(\Phi)$ belongs to a compact set in the usual topology.
Note that $z'_N \in \mathcal{E}^1$ with $C = c_N$. Suppose that $\Phi(z'_N) = (\sigma^N,(t^N_1,\ldots,t^N_{_{\kF}}))$, then after passing to a subsequence, one can assume that $\sigma^N = \sigma$ is constant, and $(t^N_1,\ldots,t^N_{_{\kF}})$ converges to a vector $(t^\infty_1,\ldots,t^\infty_{_{\kF}})$.
Let $z'_\infty$ be the piecewise affine curve identified with $(\sigma,(t^\infty_1,\ldots,t^\infty_{\kF}))$, and with $C = \lim_{N \to \infty} c_N = c$. 
Note that $\|z'_N - z'_\infty \|_{1,2} \to 0$.
Indeed, let $\mathcal{T}^N \subset [0,1]$ be the set of times where $\dz'_N(t) = \frac{c}{c_N} \dz'_\infty(t)$. 
Since $c_N \xrightarrow{N \to \infty} c$, one has 
$\int_{\mathcal{T}^N} \| \dz'_N(t) - \dz'_\infty(t)\|^2 dt \xrightarrow{N \to \infty} 0$. 
Note that for each $t \in [0,1]$ such that $\dz'_N(t)$ and $\dz'_\infty(t)$ are defined, $\|\dz'_N(t) - \dz'_\infty(t)\|^2$ is bounded, since both belong to a finite set of velocities and $c_N$ is bounded. 
Hence since $| \mathcal{T}^N| \xrightarrow{N \to \infty} 1$, 
one has $\int_{[0,1] \setminus \mathcal{T}^N} \|\dz'_N(t) - \dz'_\infty(t)\|^2 dt \xrightarrow{N \to \infty} 0$.
Moreover, since $\int_0^1 \dz'_N(t) dt = 0$ for each $N$, one gets that $\int_0^1 \dz'_\infty(t) dt = 0$ and hence $z'_\infty$ is a closed loop. Similarly, one can check that $z'_\infty \in \mathcal{E}$, and finally by Proposition \ref{IKprop}, $I_K(z'_\infty) = c^2 = I_K(z)$. Since $z$ was chosen to be a minimizer, we get that $z'_\infty$ is also a minimizer, and therefore it is a weak critical point of $I_K$, i.e. $z'_\infty \in \mathcal{E}^\dagger$. Finally by invoking Lemma \ref{dual_bijection_lemma}, one gets a piecewise affine closed characteristic $\gamma'$ where $\dg'(t) \in d \cdot \{p_1,\ldots,p_{_{\kF}}\}$ outside a finite subset of $[0,1]$, and the set $\{ t : \dg'(t) = d p_i \}$ is connected for every $i$, i.e. every velocity $p_i$ appears at most once.

\end{proof}

We are now in a position to prove Theorem \ref{formula_theorem}.

\begin{proof}[Proof of Theorem \ref{formula_theorem}]
Let $K$ be a convex polytope. From Lemma \ref{dual_bijection_lemma} it follows that 
\begin{equation}
\label{dual_capacity_formula}
\ehzcap(K) = \myMin{z \in \mathcal{E}} 2 I_K(z).
\end{equation}
Theorem \ref{simple_loop_theorem} implies that there exists $z \in \mathcal{E}$ which minimizes $I_K$ and is of the form
\[ \dz(t) = \sum_{i=1}^{\kF} \Id_{I_i} cp_{\sigma(i)} .\]
for some $\sigma \in S_{\kF}$, and $c > 0$. Therefore, when calculating the minimum in \eqref{dual_capacity_formula}, one can restrict to loops of this form in $\mathcal{E}$. Let us rewrite the conditions for $z$ to be in $\mathcal{E}$ in this case.
The condition $\int_0^1 \dz(t) dt = 0$ is equivalent to $\sum_{i=1}^{\kF} T_i p_{\sigma(i)} = 0$, where we denote $T_i = |I_i|$. 
By means of Proposition \ref{technicalProp} the condition $\int_0^1 \langle -J \dz(t), z(t) \rangle dt = 1$ can be written as
\[ 1 = \int_0^1 \langle -J\dz(t),z(t) \rangle dt = c^2 \sum_{1 \leq j < i \leq \kF} T_i T_j \omega( p_{\sigma(i)}, p_{\sigma(j)}) .\]
Finally by Proposition \ref{IKprop},
\[ I_K(z) = c^2. \]
Overall we get that
\[ \ehzcap(K) = 2\myMin{ \substack{  (T_i) \in M^T(K) \text{ s.t.} \\ \forall \sigma \in S_k, c^2 A_K(\sigma,(T_i)) \leq 1  } } c^2, \]
where
\[ M^T(K) = \left\{ (T_i)_{i=1}^{\kF} : T_i \geq 0, \sum_{i=1}^{\kF} T_i = 1, \sum_{i=1}^{\kF} T_i p_{\sigma(i)} = 0 \right\}, \]
and 
\[ A_K(\sigma,(T_i)_{i=1}^{\kF}) = \sum_{1 \leq j < i \leq \kF} T_i T_j \omega( p_{\sigma(i)}, p_{\sigma(j)}). \]
This can be written as
\[ \ehzcap(K) = 2 \left[ \max_{ \substack{ \sigma \in S_{\kF} \\ (T_i)_{i=1}^{\kF} \in M^T(K) } } \sum_{1 \leq j < i \leq \kF} T_i T_j \omega( p_{\sigma(i)}, p_{\sigma(j)})\right]^{-1} .\]
Since $p_i = \frac{2}{h_i}Jn_i$, we can set $\beta_{\sigma(i)} = \frac{T_i}{h_{\sigma(i)}}$ and get the required formula.
\end{proof}

\begin{remark}
By plugging the simple closed characteristic from Theorem \ref{simple_loop_theorem} in the formula for $\ehzcap$ from Remark \ref{abbondandolo_remark}, one gets a similar proof for Theorem \ref{formula_theorem}.
\end{remark}

\begin{remark}
\label{normals_repititions_remark}
From the proof of Theorem \ref{formula_theorem} we see that if one considers loops $z \in \mathcal{E}$ with $\dz$ piecewise constant, and whose velocities are of the form $d p_i$, without the restriction that each $p_i$ appears at most once, one still gets an upper bound for $\ehzcap(K)$.
More precisely each selection of a sequence of unit outer normals to facets of $K$ $(n_i)_{i=1}^m$ and a sequence of numbers $(\beta_i)_{i=1}^m$ that satisfy 
\[ \beta_i \geq 0, \quad \sum_{i=1}^m \beta_i h_K(n_i) = 1, \quad \sum_{i=1}^m \beta_i n_i = 0, \] gives an upper bound of the form
\[ \ehzcap(K) \leq \frac{1}{2} \left[ \sum_{1 \leq j < i \leq m} \beta_i \beta_j \omega(n_i,n_j) \right]^{-1} .\]
This fact will be useful for us in the proof of Theorem \ref{subadditivity_theorem}.
\end{remark}

\begin{remark}
\label{translation_invariant_remark}
Note that formula \eqref{formula_equation} for $\ehzcap$ in Theorem \ref{formula_theorem} is invariant under translations and is $2$-homogeneous. Indeed, if we take $\widetilde{K} = K + x_0$ we get the same normals and the oriented heights change to $\widetilde{h}_i = h_i + \langle x_0,n_i \rangle$. For $( \beta_i )_{i=1}^{\kF} \in M(K)$, one can check that $\sum \beta_i \widetilde{h}_i = \sum \beta_i h_i + \langle x_0, \sum \beta_i n_i \rangle = 1$. Hence $( \beta_i )_{i=1}^{\kF} \in M(\widetilde{K})$ so we get the same value for 
\[ \mySum{1 \leq j < i \leq \kF}{} \beta_{\sigma(i)} \beta_{\sigma(j)} \omega(n_{\sigma(i)},n_{\sigma(j)}).  \]
\\Hence $\ehzcap(K) = \ehzcap(\widetilde{K})$.

On the other hand, consider $\widetilde{K} = \lambda K$ for some $\lambda > 0$, then it has the same normals as $K$, and the oriented heights change to $\widetilde{h}_i = \lambda h_i$. For $(\beta_i)_{i=1}^{\kF} \in M(K)$, take $\widetilde{\beta}_i = \frac{\beta_i}{\lambda}$, to get $(\widetilde{\beta}_i)_{i=1}^{\kF} \in M(\widetilde{K})$. We get that 
\[ \mySum{1 \leq j < i \leq \kF}{} \widetilde{\beta}_{\sigma(i)} \widetilde{\beta}_{\sigma(j)} \omega(n_{\sigma(i)},n_{\sigma(j)}) = \frac{1}{\lambda^2}\mySum{1 \leq j < i \leq \kF}{} \beta_{\sigma(i)} \beta_{\sigma(j)} \omega(n_{\sigma(i)},n_{\sigma(j)}) .\]
\\Hence, $\ehzcap(\widetilde{K}) = \lambda^2 \ehzcap(K)$.
\end{remark}

\begin{remark}		
Formula $\eqref{formula_equation}$ is invariant under multiplication by a symplectic matrix $A \in \text{Sp}(2n)$. Indeed, take $\widetilde{K} = AK$. The new normals are $\widetilde{n}_i = \frac{(A^t)^{-1} n_i} {\| (A^t)^{-1} n_i\|}$, and the new oriented heights are $\widetilde{h}_i = \frac{h_i}{\|(A^t)^{-1} n_i\|}$. 		
One can take $\widetilde{\beta}_i = \frac{\beta_i}{\|(A^t)^{-1} n_i\|}$ and get that $\ehzcap(K) = \ehzcap(\widetilde{K})$.		
\end{remark}

\begin{remark}
The number of permutations in $S_{\kF}$ grows exponentially in $\kF$ and thus can be a huge number. For computational goals, it is worth noting that this set can be reduced. Consider a directed graph $G$, with vertex set $\{j\}$ corresponding to facets of $K$, $\{F_j\}$, and where there exists an edge $ij$ if there exists a point $x \in F_i$, and a constant $c > 0$ so that $x + c p_i \in F_j$.
Denote by $A$ the set of all cycles on $G$. An element $I \in A$ is a sequence $(I(1),\ldots,I(l))$, where there are edges $I(i)I(i+1)$ for $i=1,\ldots,l-1$ and there is an edge $I(l)I(1)$. We get that
\[\ehzcap(K) = \frac{1}{2} \left[ \max_{I \in A, (\beta_i) \in M_I(K)} \mySum{1 \leq j < i \leq |I|}{} \beta_{I(i)} \beta_{I(j)} \omega(n_{I(i)},n_{I(j)}) \right]^{-1} ,\]
where
\[ M_I(K) = \left\{ (\beta_i)_{i=1}^{|I|} : \beta_i \geq 0, \sum_{i=1}^{|I|} \beta_{I(i)} h_{I(i)} = 1, \sum_{i=1}^{|I|} \beta_{I(i)} n_{I(i)} = 0 \right\}. \]

\end{remark}

\begin{proof}[Proof of Corollary \ref{centrally_symmetric_formula_theorem}]
Let $K \subset \R^{2n}$ be a convex polytope that satisfies $K = -K$. Let $n_1,\ldots,n_{_{\kF'}},-n_1,\ldots,-n_{_{\kF'}}$ be the normals to the $(2n-1)$-dimensional facets of $K$.
Recall that $p_i = J \partial g_K^2|_{F_i} = \frac{2}{h_i} J n_i$. 
By Theorem \ref{simple_loop_theorem}, there exists a closed characteristic $\gamma$ on the boundary of $K$ whose velocities are piecewise constant, and are a positive multiple of $\{\pm p_i \}_{i=1}^{\kF'}$, so that for each $i$, the velocity which is a positive multiple of $p_i$ (and the one which is a positive multiple of $-p_i$) appears at most once.
Consider a reparametrization of $\gamma$ such that $\dg(t) \in d \{ \pm p_i \}$ almost everywhere, for some $d > 0$ independent of $i$.
From Lemma \ref{dual_bijection_lemma} there exists a corresponding element $z \in \mathcal{E}^\dagger$, such that $z = \lambda \gamma + b$ and $z$ is a minimizer of $I_K$. 
The velocities of $z$ are positive multiples of the velocities of $\gamma$ and hence have the same properties. The idea of the proof is to change $z$ to $z'$ so that $z'$ would also be a minimizer whose velocities have the same properties, and which satisfies $z'(t+\frac{1}{2}) = -z'(t)$.
The next argument (see \cite{Karasev_private}) was communicated to us by R. Karasev, we include it here for completeness.

Translate $z$ so that $z(0) = -z(\frac{1}{2})$. Since $\int_0^1 \langle -J \dz(t),z(t) \rangle dt = 1$, we either have $\int_0^{\frac{1}{2}} \langle -J \dz(t),z(t) \rangle dt \geq \frac{1}{2}$, or $\int_{\frac{1}{2}}^1 \langle -J \dz(t),z(t) \rangle dt \geq \frac{1}{2}$.
Assume without loss of generality that the first inequality holds, i.e.
\[ \int_0^{\frac{1}{2}} \langle -J \dz(t),z(t) \rangle dt \geq \frac{1}{2}. \]
Define
\[ z' = \left\{ \begin{matrix} z(t), & & t \in [0,\frac{1}{2}] \\
-z(t - \frac{1}{2}), & & t \in [\frac{1}{2},1] \end{matrix} \right. \]
Since $\dz(t) \in c \{ \pm p_i \}_{i=1}^{\kF'} = c \{ \pm \frac{2}{h_i} Jn_i \}_{i=1}^{\kF'} $, where $c = \lambda d$, one has $h_K^2 (-J \dz(t)) = 4c^2$ . 
Note that since $K = -K$ one has $h_K(x) = h_K(-x)$ for all $x \in \R^{2n}$, hence one gets 
\[ I_K(z) = \frac{1}{4} \int_0^1 h_K^2 (-J \dz(t)) dt = c^2 = \frac{1}{2} \int_0^{\frac{1}{2}} h_K^2 (-J \dz(t)) dt = I_K(z') .\]
Moreover, 
\[ \int_0^1 \dz'(t) dt = \int_0^{\frac{1}{2}} \dz(t) dt - \int_0^{\frac{1}{2}} \dz(t) dt = 0,\]
and
\[ \int_0^1 \langle -J \dz'(t) , z'(t) \rangle dt = 2 \int_0^{\frac{1}{2}} \langle -J \dz(t),z(t) \rangle dt \geq 1 .\]
Hence one can divide $z'$ by a constant to get $\int_0^1 \langle -J \dz'(t) , z'(t) \rangle dt = 1$ and $I_K(z') \leq I_K(z)$. Since $z$ was chosen to be a minimizer, the constant must be $1$, and $I_K(z') = I_K(z)$. Hence $z'$ is a minimizer that satisfies $z' = -z'$.

After plugging $z'$ in Formula \eqref{formula_equation} for $\ehzcap(K)$ from Theorem \ref{formula_theorem} we get a maximum, hence there exists an order of the normals that gives maximum in \eqref{formula_equation} which has the following form.
\[ a(1) n_{\sigma(1)},\ldots,a(\kF') n_{\sigma(\kF')},-a(1) n_{\sigma(1)},\ldots,-a(\kF') n_{\sigma(\kF')} ,\]
where $a(i) = \pm 1$, and $\sigma \in S_{\kF'}$. Recall that here the number of facets is $2 \kF'$. In addition, since $\beta_i = \frac{T_i}{h_i}$ (see the proof of Theorem \ref{formula_theorem}), from the symmetry of $z'$ the oriented heights $h_i$ and the times $T_i$ in the first half are equal to the oriented heights and the times in the second half, and hence the ``betas" in the first half are equal to the ``betas" in the second half. 
Let us consider the sum we try to maximize in \eqref{formula_equation} :
\begin{eqnarray*}
&\sum_{1 \leq i < j \leq \kF'} & \beta_{\sigma(i)} \beta_{\sigma(j)} ( \omega(a(i)n_{\sigma(i)},a(j)n_{\sigma(j)})
+ \omega(-a(i)n_{\sigma(i)},-a(j)n_{\sigma(j)}) ) \\
&&+ \sum_{i=1}^{\kF'} \beta_{\sigma(i)} a(i) \sum_{j=1}^{\kF'} \beta_{\sigma(j)} a(j) \omega(n_{\sigma(i)},-n_{\sigma(j)}) \\
&= 2 \sum_{1 \leq i < j \leq \kF'} & \beta_{\sigma(i)} \beta_{\sigma(j)}\omega(a(i)n_{\sigma(i)},a(j)n_{\sigma(j)}) \\ 
&&+ \omega( \sum_{i=1}^{\kF'} \beta_{\sigma(i)} a(i) n_{\sigma(i)} , -\sum_{i=1}^{\kF'} \beta_{\sigma(i)} a(i) n_{\sigma(i)})\\
&= 2 \sum_{1 \leq i < j \leq \kF'} & \beta_{\sigma(i)} \beta_{\sigma(j)}\omega(a(i)n_{\sigma(i)},a(j)n_{\sigma(j)}).
\end{eqnarray*} 
We get that the sum we try to maximize in \eqref{formula_equation} is equal to twice the sum over the normals in the first half. In addition, in $M(K)$ we can remove the constraint $\sum_{i=1}^{2\kF'} \beta_i n_i = 0$ because we get it automatically (since the second half of the normals are minus the first half and the ``betas" are equal). The constraint $\sum_{i=1}^{2\kF'} \beta_i h_i = 1$ becomes $\sum_{i=1}^{\kF'} \beta_i h_i = \frac{1}{2}$ and instead of considering the constraints $\beta_i \geq 0$ for each $i$, we can remove the signs $a(i)$ from the normals, and allow for negative ``betas" as well. In conclusion, we get that the only constraint for the ``betas" is $\sum_{i=1}^{\kF'} |\beta_i| h_i = \frac{1}{2}$ and this gives us the formula we need and thus proves Corollary \ref{centrally_symmetric_formula_theorem}.
\end{proof}
\section{Subadditivity for hyperplane cuts}
\label{more_proofs_section}

In the proof of Theorem \ref{subadditivity_theorem}, we use the formula for the capacity that was proved in Theorem \ref{formula_theorem}, in its equivalent formulation which was given in Remark \ref{reformulationRemark}, namely,
\begin{equation}
\label{formula2_equation}
\ehzcap(K) = \frac{1}{2} \min_{ (\beta_i,n_i)_{i=1}^{\kF} \in M_2(K)  } \left( \sum_{i=1}^{\kF} \beta_i h_K(n_i) \right)^2, 
\end{equation} 
where
\[ M_2(K) = 
\left\{
	\begin{array}{ll}
		(\beta_i,n_i)_{i=1}^{\kF} : \beta_i \geq 0, (n_i)_{i=1}^{\kF} \text{ are different outer normals to }K\\ \sum_{i=1}^{\kF} \beta_i n_i = 0, \quad\quad
		\sum_{1 \leq j < i \leq \kF} \beta_i \beta_j \omega(n_i,n_j) = 1 \\
	\end{array}
\right\} .\]

To see that this is indeed equivalent to the form given in Theorem \ref{formula_theorem}, note that
\begin{align*}
\ehzcap(K) &= \frac{1}{2} \left[ \max_{\sigma \in S_{\kF}, (\beta_i) \in M(K)}  \mySum{1 \leq j < i \leq \kF}{} \beta_{\sigma(i)} \beta_{\sigma(j)} \omega(n_{\sigma(i)},n_{\sigma(j)}) \right]^{-1} \\
&= \frac{1}{2} \left[ \max_{\sigma \in S_{\kF}, \beta_i \geq 0, \sum_{i=1}^{\kF} \beta_i n_i = 0} \frac{\sum_{1 \leq j < i \leq \kF} \beta_{\sigma(i)} \beta_{\sigma(j)} \omega(n_{\sigma(i)},n_{\sigma(j)}) }{  \left( \sum_{i=1}^{\kF} \beta_i h_i \right)^2 } \right]^{-1} \\
&= \frac{1}{2} \left[ \max_{(\beta_i, n_i) \in M_2(K)} \frac{1}{\left( \sum_{i=1}^{\kF} \beta_i h_K(n_i) \right)^2} \right]^{-1} \\
&= \frac{1}{2} \min_{(\beta_i,n_i) \in M_2(K)} \left( \sum_{i=1}^{\kF} \beta_i h_K(n_i) \right)^2.
\end{align*} 

Before providing the full proof of Theorem \ref{subadditivity_theorem}, let us briefly describe the main idea.
Suppose we cut a convex polytope $K$ by a hyperplane $H$ into $K_1$ and $K_2$. 
Our strategy is to take minimizers in $M_2(K_1)$ and in $M_2(K_2)$, and construct from them a sequence of normals and coefficients on $K$ that gives an upper bound for $\ehzcap(K)$ which is less than or equal to $\ehzcap(K_1)+\ehzcap(K_2)$. 
By Theorem \ref{simple_loop_theorem}, we know that one can take the minimizers so that the normal to the shared facet $K_1 \cap H = K_2 \cap H$ appears at most once. This enables us to choose coefficients so that this normal in both minimizers cancels out and we are left with a minimizer in $M_2(K)$.

\begin{proof}[Proof of Theorem \ref{subadditivity_theorem}]
From the continuity of the EHZ capacity (see e.g. \cite{mcDuff_Salamon}, Exercise 12.7) it is enough to prove the statement for polytopes. Let $K \subset \R^{2n}$ be a convex polytope.

Suppose we cut $K$ by a hyperplane into $K_1$ and $K_2$.
Without loss of generality, choose the origin to be on the hyperplane that divides $K$ into $K_1$ and $K_2$. 
Choose $( \beta_i,n_i )_{i=1}^{\bF{K_1}}, ( \alpha_i,w_i )_{i=1}^{\bF{K_2}}$ to be minimizers in Equation \eqref{formula2_equation} for $\ehzcap(K_1)$ and $\ehzcap(K_2)$ respectively. 
In addition, denote by $n$ the normal to the hyperplane splitting $K$ into $K_1$ and $K_2$ where we choose the positive direction to go into $K_1$.
Note that for each outer normal $n_i \neq n$ of $K_1$, one has $h_{K_1}(n_i) = h_K(n_i)$, and for each outer normal $w_i \neq n$ of $K_2$, one has $h_{K_2}(w_i) = h_K(w_i)$. In addition, one has $h_{K_1}(n) = h_{K_2}(n) = 0$.
Assume without loss of generality that $n_1 = -n$ and $w_{_{\bF{K_2}}} = n$ (this can be assumed because one can always take cyclic permutations of the sequences to get new sequences that satisfy the constraints and give the same result). 
By means of Theorem \ref{simple_loop_theorem}, each normal vector appears at most once, and hence for each $i \neq 1$, $n_i \neq n$, and for each $i \neq \bF{K_2}$, $w_i \neq n$. 
First note that if $\beta_1 = 0$ or $\alpha_{_{\bF{K_2}}} = 0$ we are done. 
Indeed, suppose that $\beta_1 = 0$. 
All the normals $n_i$ for $i \geq 2$ are normals also to facets of $K$. 
Hence $\beta_1=0$ implies $(\beta_i,n_i)_{i=2}^{\bF{K_1}} \in M_2(K)$ (after adding the rest of the normals with coefficients zero), and this gives $\ehzcap(K) \leq \ehzcap(K_1)$. 
From now on  assume $\beta_1 \neq 0$ and similarly $\alpha_{_{\bF{K_2}}} \neq 0$. 
Next, consider the following sequence of coefficients
\begin{align*}
(\delta_i)_{i=1}^{\bF{K_2}+\bF{K_1}-2} := 
\left( \frac{\beta_1}{\sqba} \alpha_1,\ldots,\frac{\beta_1}{\sqba} \alpha_{{_{\bF{K_2}-1}}},\frac{\alpha_{_{\bF{K_2}}}}{\sqba} \beta_2,\ldots,\frac{\alpha_{_{\bF{K_2}}}}{\sqba} \beta_{_{\bF{K_1}}} \right),
\end{align*}
and the sequence of normals $( u_i )_{i=1}^{\bF{K_1}+\bF{K_2}-2} := (w_1,\ldots,w_{_{\bF{K_2}-1}},n_2,\ldots,n_{_{\bF{K_1}}})$, where $\sqba := \sqrt{\beta_1^2 + \alpha_{_{\bF{K_2}}}^2}$. Note that here we allow for repetitions of the normals and we may have $\bF{K_1}+\bF{K_2}-2 > \kF$. 
However, from Remark \ref{normals_repititions_remark} we know that if one considers any sequence $(\delta_i,u_i)_{i=1}^{m}$ with $\delta_i \geq 0$, $u_i$ a normal to $K$, that satisfies the constraints $\sum_{i=1}^{m} \delta_i u_i = 0$ and $\sum_{1 \leq j < i \leq m} \delta_i \delta_j \omega(u_i,u_j) = 1$ for any $m \in \mathbb{N}$, the value
\[ \frac{1}{2} \left( \sum_{i=1}^{m} \delta_i h_K(u_i) \right)^2\]
still gives an upper bound for $\ehzcap(K)$.
Hence we wish to show that $(\delta_i,u_i)_{i=1}^{\bF{K_1}+\bF{K_2}-2}$ satisfies the constraints for $K$.
First note that since $\sum_{i=1}^{\bF{K_2}} \alpha_i w_i = 0$, $\sum_{i=1}^{\bF{K_1}} \beta_i n_i = 0$ one has
\[ \sum_{i=1}^{\bF{K_1}+\bF{K_2}-2} \delta_i u_i = - \frac{\beta_1}{\sqba} \cdot \alpha_{_{\bF{K_2}}} n +  \frac{\alpha_{_{\bF{K_2}}}}{\sqba} \cdot \beta_1 n = 0 .\]
 Next, note that
\[ \sum_{1 \leq j < i \leq \bF{K_1}+\bF{K_2}-2} \delta_i \delta_j \omega(u_i,u_j) =  \frac{\beta_1^2}{\sqba^2} \sum_{1 \leq j < i \leq \bF{K_2}-1} \alpha_i \alpha_j \omega(w_i,w_j) + \]
\[ \frac{\alpha_{_{\bF{K_2}}}^2}{\sqba^2} \sum_{2 \leq j < i \leq \bF{K_1}} \beta_i \beta_j \omega(n_i,n_j) + \frac{\beta_1\alpha_{_{\bF{K_2}}}}{\sqba^2} \sum_{i=2}^{\bF{K_1}} \sum_{j=1}^{\bF{K_2}-1} \beta_i \alpha_j \omega(n_i,w_j). \] 
Since $ -\alpha_{_{\bF{K_2}}} w_{_{\bF{K_2}}} = \sum_{i=1}^{\bF{K_2}-1} \alpha_i w_i $, one has $\omega( \alpha_{_{\bF{K_2}}} w_{_{\bF{K_2}}}, \sum_{i=1}^{\bF{K_2}-1} \alpha_i w_i) = 0$, and therefore
\[ \sum_{1 \leq j < i \leq \bF{K_2}-1} \alpha_i \alpha_j \omega(w_i,w_j) = \sum_{1 \leq j < i \leq \bF{K_2}} \alpha_i \alpha_j \omega(w_i,w_j) = 1. \]
Similarly,
\[ \sum_{2 \leq j < i \leq \bF{K_1}} \beta_i \beta_j \omega(n_i,n_j)  = 1. \]
Finally, note that
\[ \sum_{i=2}^{\bF{K_1}} \sum_{j=1}^{\bF{K_2}-1} \beta_i \alpha_j \omega(n_i,w_j) = \omega( \sum_{i=2}^{\bF{K_1}} \beta_i n_i, \sum_{j=1}^{\bF{K_2}-1} \alpha_i w_i) = -\beta_1 \alpha_{_{\bF{K_2}}} \omega(n,n) = 0.\]
Overall we get that 
\[ \sum_{1 \leq j < i \leq \bF{K_1}+\bF{K_2}-2} \delta_i \delta_j \omega(u_i,u_j) =  \frac{\beta_1^2}{\sqba^2} + \frac{\alpha_{_{\bF{K_2}}}^2}{\sqba^2} = 1. \] 
Hence $( \delta_i , u_i )_{i=1}^{\bF{K_1}+\bF{K_2}-2}$ indeed satisfies the constraints, and therefore $\ehzcap(K)$ is less than or equal to $\frac{1}{2} \left(\sum_{i=1}^{\bF{K_1}+\bF{K_2}-2} \delta_i h_K(u_i) \right)^2$. It remains to show that 
\[ \frac{1}{2} \left(\sum_{i=1}^{\bF{K_1}+\bF{K_2}-2} \delta_i h_K(u_i) \right)^2 \leq \frac{1}{2} \left(\sum_{i=1}^{\bF{K_2}} \alpha_i h_{K_2}(w_i) \right)^2 + \frac{1}{2} \left(\sum_{i=1}^{\bF{K_1}} \beta_i h_{K_1}(n_i) \right)^2. \]
A straightforward computation gives
\begin{eqnarray*}
 \left(\sum_{i=1}^{\bF{K_1}+\bF{K_2}-2} \delta_i h_K(u_i) \right)^2 &=& \frac{1}{\sqba^2} \left( \beta_1^2 \left(\sum_{i=1}^{\bF{K_2}-1} \alpha_i h_{K_2}(w_i) \right)^2 \right. \\
 &+& 2 \beta_1 \alpha_{_{\bF{K_2}}} \sum_{i=1}^{\bF{K_2}-1} \alpha_i h_{K_2}(w_i) \sum_{i=2}^{\bF{K_1}} \beta_i h_{K_1}(n_i) \\
 &+& \left. \alpha_{_{\bF{K_2}}}^2 \left(\sum_{i=2}^{\bF{K_1}} \beta_i h_{K_1}(n_i) \right)^2 \right).
\end{eqnarray*}
Since
\begin{align*} 
2 \beta_1 \alpha_{_{\bF{K_2}}} \sum_{i=1}^{\bF{K_2}-1} \alpha_i h_{K_2}(w_i) & \sum_{i=2}^{\bF{K_1}} \beta_i h_{K_1}(n_i)  \\
& \leq \alpha_{_{\bF{K_2}}}^2 \left( \sum_{i=1}^{\bF{K_2}-1} \alpha_i h_{K_2}(w_i) \right)^2 + \beta_1^2 \left( \sum_{i=2}^{\bF{K_1}} \beta_i h_{K_1}(n_i) \right)^2, 
\end{align*}
one has
\begin{align*}
\ehzcap(K) &\leq \frac{1}{2} \left(\sum_{i=1}^{\bF{K_1}+\bF{K_2}-2} \delta_i h_K(u_i) \right)^2  \\
&\leq \frac{1}{2\sqba^2} \left(  (\beta_1^2 + \alpha_{_{\bF{K_2}}}^2) \left(\sum_{i=1}^{\bF{K_2}-1} \alpha_i h_{K_2}(w_i) \right)^2 + (\beta_1^2 + \alpha_{_{\bF{K_2}}}^2) \left(\sum_{i=2}^{\bF{K_1}} \beta_i h_{K_1}(n_i) \right)^2 \right) \\
&= \frac{1}{2} \left(\sum_{i=1}^{\bF{K_2}-1} \alpha_i h_{K_2}(w_i)  \right)^2 + \frac{1}{2}\left(\sum_{i=2}^{\bF{K_1}} \beta_i h_{K_1}(n_i) \right)^2  \\
&= \frac{1}{2}\left(\sum_{i=1}^{\bF{K_2}} \alpha_i h_{K_2}(w_i) \right)^2 + \frac{1}{2} \left(\sum_{i=1}^{\bF{K_1}} \beta_i h_{K_1}(n_i) \right)^2 
= \ehzcap(K_1) + \ehzcap(K_2), \\
\end{align*}
where the second to last equality is due to the fact that 
\[h_{K_1}(n) = h_{K_2}(n) = 0. \]
\end{proof}

\bibliography{references}

\begin{thebibliography}{10}

\bibitem{Abbondandolo}
{\sc A.~Abbondandolo and P.~Majer}, {\em A non-squeezing theorem for convex
  symplectic images of the {H}ilbert ball}, Calculus of Variations and Partial
  Differential Equations, 54 (2015), pp.~1469--1506.

\bibitem{Karasev_private}
{\sc A.~Akopyan and R.~Karasev}, {\em Estimating symplectic capacities from
  lengths of closed curves on the unit spheres}, arXiv:1801.00242,  (2017).

\bibitem{bang_problem}
{\sc A.~Akopyan, R.~Karasev, and F.~Petrov}, {\em Bang’s problem and
  symplectic invariants}, arXiv:1404.0871,  (2014).

\bibitem{singular_capacity}
{\sc S.~Artstein-Avidan and Y.~Ostrover}, {\em Bounds for {M}inkowski billiard
  trajectories in convex bodies}, Int. Math. Res. Not. IMRN,  (2014),
  p.~165–193.

\bibitem{capacity_survey_1}
{\sc K.~Cieliebak, H.~Hofer, J.~Latschev, and F.~Schlenk}, {\em Quantitative
  symplectic geometry}, in Dynamics, ergodic theory, and geometry, vol.~54 of
  Math. Sci. Res. Inst. Publ., Cambridge Univ. Press, Cambridge, 2007,
  pp.~1--44.

\bibitem{clarke}
{\sc F.~H. Clarke}, {\em A classical variational principle for periodic
  {H}amiltonian trajectories}, Proc. Amer. Math. Soc., 76 (1979), pp.~186--188.

\bibitem{eh_capacity_def}
{\sc I.~Ekeland and H.~Hofer}, {\em Symplectic topology and {H}amiltonian
  dynamics}, Math. Z., 200 (1989), p.~355–378.

\bibitem{non_squeezing}
{\sc M.~Gromov}, {\em Pseudoholomorphic curves in symplectic manifolds},
  Invent. Math., 82 (1985), pp.~307--347.

\bibitem{hz_capacity}
{\sc H.~{}Hofer and E.~Zehnder}, {\em A new capacity for symplectic manifolds},
  in Analysis, et cetera, Academic Press, Boston, MA, 1990, p.~405–427.

\bibitem{hofer_zehnder}
{\sc H.~Hofer and E.~Zehnder}, {\em Symplectic Invariants and {H}amiltonian
  Dynamics}, Birkhauser Advanced Texts, Birkhauser Verlag, 1994.

\bibitem{singular_survey_1}
{\sc A.~K\"unzle}, {\em Une capacit\'e symplectique pour les ensembles convexes
  et quelques applications}, Ph.D. thesis, Universit\'e Paris IX Dauphine,
  (1990).

\bibitem{singular_survey_2}
{\sc A.~{}K\"unzle}, {\em Singular {H}amiltonian systems and symplectic
  capacities}, Singularities and differential equations, Banach Center Publ.,
  33, Polish Acad. Sci., Warsaw,  (1996), pp.~171--187.

\bibitem{capacity_survey_2}
{\sc D.~McDuff}, {\em Symplectic topology today}, AMS Joint Mathematics
  Meeting,  (2014).

\bibitem{mcDuff_Salamon}
{\sc D.~McDuff and D.~Salamon}, {\em Introduction to Symplectic Topology}, 2nd
  edition, Oxford University Press, Oxford, England, 1998.

\bibitem{Nir}
{\sc Y.~Nir}, {\em On closed characteristics and billiards in convex bodies},
  Master’s thesis, Tel Aviv University,  (2013).

\bibitem{capacity_survey_3}
{\sc Y.~Ostrover}, {\em When symplectic topology meets {B}anach space
  geometry}, proceedings of the ICM, 2 (2014), pp.~959--981.

\bibitem{subdiff_1}
{\sc R.~Schneider}, {\em Convex Bodies: the {B}runn-{M}inkowski theory},
  Encyclopedia of Mathematics and its Applications, 44. Cambridge University
  Press, 1993.

\bibitem{schaftingen}
{\sc J.~Van~Schaftingen}, {\em Approximation in {S}obolev spaces by piecewise
  affine interpolation}, Journal of Mathematical Analysis and Applications, 420
  (2014), pp.~40--47.

\bibitem{viterbo_cap}
{\sc C.~Viterbo}, {\em Capacit\'es symplectiques et applications}, S\'eminaire
  Bourbaki, Vol. 1988/89, Ast\'erisque 177–178 (1989), no. 714, 345–362.

\bibitem{ball_add}
{\sc K.~Zehmisch}, {\em The codisc radius capacity}, Electron. Res. Announc.
  Math. Sci., 20 (2013), pp.~77--96.

\end{thebibliography}
\bibliographystyle{siam}

\vspace{1cm}
\noindent
School of Mathematical Sciences\\
Tel Aviv University\\
Tel Aviv 6997801, Israel\\
pazithaim@mail.tau.ac.il

\end{document}